\newcommand\CA{{\mathcal A}} 
\newcommand\CB{{\mathcal B}}
\newcommand\CF{{\mathcal F}}
\newcommand\CIF{{\mathcal {IF}}} 
\newcommand\CIFM{{\mathcal {IFM}}} 
\newcommand\CHIFM{{\mathcal {HIFM}}} 
\newcommand\CRFM{{\mathcal {RFM}}}
\newcommand\HIF{{\mathcal {HIF}}}
\newcommand\RF{{\mathcal {RF}}}
\newcommand\BBK{{\mathbb K}}
\newcommand\BBZ{{\mathbb Z}}
\newcommand\Ann{{\operatorname{Ann}}}
\newcommand\codim{\operatorname{codim}}
\newcommand\Der{{\operatorname{Der}}}
\newcommand\pdeg{\operatorname{pdeg}}
\numberwithin{equation}{section}
\theoremstyle{plain}
\newtheorem{lemma}[equation]{Lemma}
\newtheorem{theorem}[equation]{Theorem}
\newtheorem{corollary}[equation]{Corollary}
\newtheorem{proposition}[equation]{Proposition}
\theoremstyle{definition}
\newtheorem{defn}[equation]{Definition}
\newtheorem{remark}[equation]{Remark}
\newtheorem{example}[equation]{Example}
\thanks{We acknowledge 
support from the DFG-priority program 
SPP1489 ``Algorithmic and Experimental Methods in
Algebra, Geometry, and Number Theory''.}
\subjclass[2010]{Primary 52C35, 14N20; Secondary 51D20}
\begin{document}

\title[ Inductive and Recursive Freeness of Localizations of Multiarrangements ]
{ Inductive and Recursive Freeness of Localizations of Multiarrangements} 

\author[T. Hoge]{Torsten Hoge}
\address
{Institut f\"ur Algebra, Zahlentheorie und Diskrete Mathematik,
Fakult\"at f\"ur Mathematik und Physik,
Leibniz Universit\"at Hannover,
Welfengarten 1,
30167 Hannover, Germany}
\email{hoge@math.uni-hannover.de}

\author[G. R\"ohrle]{Gerhard R\"ohrle}
\address
{Fakult\"at f\"ur Mathematik,
Ruhr-Universit\"at Bochum,
D-44780 Bochum, Germany}
\email{gerhard.roehrle@rub.de}

\author[A. Schauenburg]{Anne Schauenburg}
\email{anne.schauenburg@rub.de}

\keywords{
Multiarrangement,
free arrangement, 
inductively free arrangement, 
recursively free arrangement, 
localization of an arrangement}

\allowdisplaybreaks

\begin{abstract}
The class of free multiarrangements
is known to be closed under taking localizations.
We extend this result to the stronger notions of inductive 
and recursive freeness.

As an application, we prove that 
recursively free multiarrangements 
are compatible with the product 
construction for multiarrangements.
In addition, we show how our results 
can be used to derive that
some canonical classes of free multiarrangements 
are not inductively free.
\end{abstract}

\maketitle


\section{Introduction}

The class of free arrangements plays a pivotal role 
in the study of hyperplane arrangements.
While an arbitrary subarrangement of a free arrangement
need not be free, freeness is retained by 
special types of subarrangements, so called localizations, 
\cite{terao:freefinitefields},
\cite[Thm.\ 4.37]{orlikterao:arrangements}.
It is natural to investigate this property for other 
classes of free arrangements.

For that purpose, let $\CF$, $\CIF$, $\RF$ and $\HIF$ denote the classes of 
free, inductively free, recursively free and 
hereditarily inductively free hyperplane arrangements, respectively
(see \cite[Defs.\ 4.53, 4.60]{orlikterao:arrangements}).
Note that we have proper inclusions throughout 
$\HIF \subsetneq \CIF \subsetneq \RF \subsetneq \CF$, see 
\cite[Ex.\ 2.16]{hogeroehrle:indfree}, 
\cite[Ex.\ 4.56]{orlikterao:arrangements}, and 
\cite[Rem.\ 3.7]{cuntzhoge}, respectively.
Our first result shows that localization
preserves each of these stronger notions of
freeness.

\begin{theorem}
\label{thm:local}
Each of the classes 
$\CIF$, $\RF$  and $\HIF$ 
is closed under taking localizations.
\end{theorem}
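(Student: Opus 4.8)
The plan is to reduce everything to a single structural identity describing how localization interacts with a triple, and then to run an induction on the number of hyperplanes. Fix an arrangement $\CA$ in $V$, a hyperplane $H \in \CA$, the triple $(\CA, \CA', \CA'')$ with $\CA' = \CA\setminus\{H\}$ and $\CA'' = \CA^H$, and a flat $X \in L(\CA)$. First I would record the elementary but crucial observation that localization commutes with deletion and restriction along $H$: if $X \not\subseteq H$ then $H \notin \CA_X$ and simply $\CA_X = (\CA')_X$, whereas if $X \subseteq H$ then $H \in \CA_X$, we have $(\CA')_X = \CA_X\setminus\{H\} = (\CA_X)'$, and --- the key point --- $(\CA'')_X = (\CA^H)_X = (\CA_X)^H = (\CA_X)''$, because for $X\subseteq H$ the condition $X \subseteq H\cap K$ is equivalent to $X\subseteq K$. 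Thus in the case $X\subseteq H$ the localized triple $(\CA_X, (\CA_X)', (\CA_X)'')$ is exactly the termwise localization of $(\CA,\CA',\CA'')$.

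With this in hand, I would prove the statement for $\CIF$ by induction on $|\CA|$, the empty arrangement being the trivial base case. Assume $\CA$ is inductively free and take the final addition step, so that $\CA',\CA'' \in \CIF$ and the triple is an Addition--Deletion triple. If $X\not\subseteq H$, then $\CA_X = (\CA')_X$, which lies in $\CIF$ by the inductive hypothesis applied to $\CA'$, and we are done. If $X\subseteq H$, the inductive hypothesis applied to $\CA'$ and to $\CA''$ (both of which have fewer hyperplanes than $\CA$) shows that $(\CA_X)' = (\CA')_X$ and $(\CA_X)'' = (\CA'')_X$ are inductively free; it then remains to check that the localized triple is again an Addition--Deletion triple, for then the addition part of Terao's Addition--Deletion Theorem places $\CA_X$ in $\CIF$.

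Verifying that the localized triple satisfies the Addition--Deletion exponent relation is, I expect, the main obstacle. All three localizations $\CA_X$, $(\CA_X)'$, $(\CA_X)''$ are free by Terao's localization theorem (applied to the free arrangements $\CA$, $\CA'$, $\CA''$), so the content is purely about exponents: one must show $\exp (\CA_X)'' \subseteq \exp (\CA_X)'$ as multisets. I would handle this by upgrading the localization theorem to track exponents --- namely, that the exponents of a localization of a free arrangement form a sub-multiset of the exponents of the arrangement --- and then checking that the sub-multisets selected for $\CA'$ and for $\CA'' = \CA^H$ are compatible, using the original relation $\exp\CA'' = \exp\CA' \setminus \{b_\ell - 1\}$. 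Encapsulating this as a lemma, ``the termwise localization of an Addition--Deletion triple is again an Addition--Deletion triple'', is what makes the whole argument go through.

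The remaining two classes follow the same pattern. For $\RF$ I would induct on the length of a recursive construction of $\CA$ rather than on $|\CA|$; an addition step is treated exactly as above, while for a deletion step $\CA = \CB\setminus\{H\}$ with $\CB,\CB^H\in\RF$ one uses $\CA_X = \CB_X$ when $X\not\subseteq H$ (so $\CA_X\in\RF$ by induction) and, when $X\subseteq H$, deletes $H$ from $\CB_X\in\RF$, the needed restriction $(\CB_X)^H = (\CB^H)_X$ being recursively free by induction and the triple being Addition--Deletion by the same lemma. Finally, for $\HIF$ I would induct on $\dim V$: given $\CA\in\HIF$, the $\CIF$ case already gives $\CA_X\in\CIF$, and for each $K\in\CA_X$ the commutation identity yields $(\CA_X)^K = (\CA^K)_X$, a localization of the hereditarily inductively free arrangement $\CA^K$ in the lower-dimensional space $K$, hence hereditarily inductively free by induction; as $K$ ranges over $\CA_X$ this is precisely what is needed to conclude $\CA_X\in\HIF$.
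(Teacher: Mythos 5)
You have correctly identified both the right architecture (localize the chain term by term, use the commutation of localization with deletion and restriction, and induct) and the right obstacle (verifying that the localized triple still satisfies the exponent condition $\exp (\CA_X)'' \subseteq \exp (\CA_X)'$). This is essentially the skeleton of the paper's argument, which proves the multiarrangement version (Theorem \ref{thm:localmulti}) and specializes. However, the lemma you propose in order to overcome the obstacle is false: the exponents of a localization of a free (even inductively free) arrangement need \emph{not} form a sub-multiset of the exponents of the arrangement. The reflection arrangement of type $D_4$ is inductively free with exponents $\{1,3,3,5\}$ and has a localization of type $A_3$ (at the fixed space of a parabolic subgroup, so $\CA(W)_X = \CA(W_X)$ by \cite[Cor.\ 6.28]{orlikterao:arrangements}) whose nonzero exponents are $\{1,2,3\}$; since $2 \notin \{1,3,3,5\}$, there is no sub-multiset to select, and a fortiori no way to ``check that the sub-multisets selected for $\CA'$ and $\CA''$ are compatible.'' (If one keeps the zero exponents forced by non-essentiality of $\CA_X$, the claim fails even more crudely.) The best one can say in general is a termwise inequality between the ordered exponents of a free subarrangement and those of the ambient free arrangement, as in Remark \ref{rem:multiexponents}, and that is not enough to run your compatibility check. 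So, as written, the central step of your proof does not go through.

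The fix is simpler than what you attempt and requires no tracking of exponents through localization at all. When $X \subseteq H$, both $\CA_X$ and $(\CA_X)' = (\CA')_X$ are free by Terao's localization theorem, and they differ by the single hyperplane $H$. The strong form of the Addition--Deletion Theorem (\cite[Thm.\ 4.46]{orlikterao:arrangements} for simple arrangements; Remark \ref{rem:restriction} in the multiarrangement setting) asserts that for \emph{any} pair of free arrangements differing by one hyperplane, the exponents automatically agree except in a single entry, where they differ by $1$, and that the restriction $(\CA_X)''$ is then free with exponents equal to the common part. Hence $\exp (\CA_X)'' \subseteq \exp (\CA_X)'$ holds automatically, the localized triple is an Addition--Deletion triple, and your inductions (on $|\CA|$ for $\CIF$, on the length of the recursive construction for $\RF$ --- where the same observation supplies the condition $\exp(\CB_X)'' \subseteq \exp \CB_X$ needed for a deletion step --- and the reduction of $\HIF$ to $\CIF$ via $(\CA_X)^Y = (\CA^Y)_X$) close exactly as you describe. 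With this substitution your argument coincides in substance with the paper's proof, which differs only in working with multiarrangements throughout and inducting on rank rather than on the number of hyperplanes.
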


Moreover, 
freeness is compatible with 
the product construction for arrangements 
\cite[Prop.\ 4.28]{orlikterao:arrangements}.
It was shown in \cite[Prop.\ 2.10, Cor.\ 2.12]{hogeroehrle:indfree}
that this property also holds for both $\CIF$  and $\HIF$.
Our second main result extends this property to the class $\RF$.

\begin{theorem}
\label{thm:recfreeproducts}
A product of arrangements belongs to $\RF$ if and only if 
each factor belongs to $\RF$.
\end{theorem}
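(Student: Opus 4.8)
The plan is to prove both implications by exploiting the elementary fact that deletion, restriction and exponents behave well with respect to the product. Write the product of $\CA$ in $V$ and $\CB$ in $W$ as an arrangement in $V\oplus W$ with hyperplanes $H\oplus W$ ($H\in\CA$) and $V\oplus K$ ($K\in\CB$), and denote by $\Phi_m$ the empty arrangement in an $m$-dimensional space. For $H\in\CA$ one has $(\CA\times\CB)\setminus\{H\oplus W\}=(\CA\setminus\{H\})\times\CB$ and $(\CA\times\CB)^{H\oplus W}=\CA^H\times\CB$, while $\exp(\CA\times\CB)=\exp\CA\cup\exp\CB$ as multisets by \cite[Prop.\ 4.28]{orlikterao:arrangements}. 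Consequently the exponent inclusion governing an addition or deletion inside the first factor is equivalent, after adjoining the common multiset $\exp\CB$ to both sides, to the corresponding inclusion for the product. Thus a recursive step carried out on $\CA$ lifts verbatim to the product, and this is the engine behind both directions.

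For the forward implication fix $\CB\in\RF$ and set $\CC=\{\CD : \CD\times\CB\in\RF\}$. Since $\RF$ is by definition the smallest class of arrangements that contains all empty arrangements and is closed under addition and deletion (see \cite[Def.\ 4.60]{orlikterao:arrangements}), it suffices to verify these three properties for $\CC$. Closure under addition and deletion is immediate from the displayed identities together with the exponent translation above: if $\CD',\CD''\in\CC$ are the deletion and restriction of $\CD$ with $\exp\CD''\subseteq\exp\CD'$, then $\CD'\times\CB,\CD''\times\CB\in\RF$ are the deletion and restriction of $\CD\times\CB$ along $H\oplus W$ with $\exp(\CD''\times\CB)\subseteq\exp(\CD'\times\CB)$, whence $\CD\times\CB\in\RF$ and $\CD\in\CC$; the deletion case is symmetric. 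That $\CC$ contains every empty arrangement is the statement that $\CB\in\RF$ implies $\Phi_m\times\CB\in\RF$, which I prove by induction on the size of a recursive certificate for $\CB$: an empty $\CB$ gives an empty product, and an addition or deletion step for $\CB$ lifts, via the identities above, to one for $\Phi_m\times\CB$ in which the treated hyperplane is $\Phi_m$-vertical. Minimality of $\RF$ then yields $\RF\subseteq\CC$, i.e.\ $\CA,\CB\in\RF$ implies $\CA\times\CB\in\RF$.

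For the converse I first reduce to the removal of an empty factor. If $\CA\times\CB\in\RF$, then localizing at the flat $Z\oplus W$, where $Z=\bigcap_{H\in\CA}H$, retains exactly the hyperplanes $H\oplus W$ and discards every $V\oplus K$ (since $V\oplus K\supseteq Z\oplus W$ would force $W\subseteq K$); hence $(\CA\times\CB)_{Z\oplus W}=\CA\times\Phi_{\dim W}$, which lies in $\RF$ by Theorem \ref{thm:local}. Symmetrically $\Phi_{\dim V}\times\CB\in\RF$. It therefore remains to prove the \emph{empty factor lemma}: for every arrangement $\CD$, $\CD\times\Phi_1\in\RF$ implies $\CD\in\RF$; iterating this and using commutativity of the product then extracts both factors.

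The empty factor lemma is where the real work lies, and I prove it by induction on the size of a recursive certificate of $\CD\times\Phi_1$, viewed in $V\oplus\BBK$. Every hyperplane of $\CD\times\Phi_1$ is $\Phi_1$-vertical, of the form $H\oplus\BBK$. If the final certificate step is an addition, the added hyperplane is necessarily vertical, so the deletion and restriction are $(\CD\setminus\{H\})\times\Phi_1$ and $\CD^H\times\Phi_1$; the induction hypothesis identifies $\CD\setminus\{H\}$ and $\CD^H$ as recursively free, and the exponent translation lets me reassemble $\CD$ by a single addition. If the final step is a deletion, it exhibits a larger $\CE\in\RF$ with $\CE\setminus\{G\}=\CD\times\Phi_1$ and $\CE^G\in\RF$. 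When $G$ is vertical, $\CE=(\CD\cup\{H'\})\times\Phi_1$ is again a product and the induction hypothesis applies to both $\CE$ and $\CE^G$, yielding $\CD$ by one deletion. The one genuinely new phenomenon, and the main obstacle, is a deletion in which $G$ is \emph{not} vertical, so that $\CE$ is no longer a product and the induction hypothesis is unavailable. Here I observe that $G$ is the graph of a linear functional, so the projection $V\oplus\BBK\to V$ restricts to a linear isomorphism $G\cong V$ carrying each $G\cap(H\oplus\BBK)$ to $H$; thus the restriction $\CE^G$, which the very deletion step guarantees lies in $\RF$, is linearly isomorphic to $\CD$. Since $\RF$ is invariant under linear isomorphism, $\CD\in\RF$, with no appeal to the induction hypothesis needed in this case. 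This disposes of the empty factor lemma and completes the converse.
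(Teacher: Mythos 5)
Your proof is correct, and for the hard direction its core coincides with the paper's: localize the product at the flat $T_{\CA}\oplus W$ (the paper's $X_1=T_{\CA_1}\oplus V_2$) and invoke Theorem \ref{thm:local} for $\RF$. But you treat two points in genuinely more detail. First, for the easy direction the paper merely declares it ``straightforward'' with a pointer to \cite[Prop.~2.10]{hogeroehrle:indfree}; your minimal-class argument --- fixing $\CB$ and showing that $\{\CD : \CD\times\CB\in\RF\}$ contains the empty arrangements and is closed under the addition and deletion rules --- is a clean way to make that precise. Second, and more substantively, after localizing the paper simply writes $\CA_{X_1}=\{H_1\oplus V_2 \mid H_1\in\CA_1\}\cong\CA_1$ and concludes that $\CA_1$ is recursively free, thereby silently identifying ``$\CA_1\times\Phi_{\ell_2}\in\RF$'' with ``$\CA_1\in\RF$''. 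For $\CF$ and $\CIF$ this identification is harmless, but for $\RF$ it is exactly your empty factor lemma and it is not automatic: a recursive chain witnessing $\CA_1\times\Phi_{\ell_2}\in\RF$ may pass through arrangements containing non-vertical hyperplanes, so one cannot simply project the chain onto the first factor. Your resolution of the non-vertical deletion case --- the deleted hyperplane $G$ is the graph of a linear functional, so the restriction $\CE^G$, which the deletion step itself certifies to lie in $\RF$, is linearly isomorphic to $\CD$ --- is the one genuinely new idea, and it closes precisely the point the paper glosses over. The trade-off: the paper's route is shorter and rests entirely on the already-proven localization theorem, while yours is self-contained at the level of recursive certificates and makes the de-essentialization step explicit.
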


It can be a rather complicated affair to show that a given 
arrangement is or fails to be inductively free, see
for instance 
\cite[Lem.\ 4.2]{amendhogeroehrle:indfree},
\cite[\S 5.2]{cuntz:indfree}, and  
\cite[Lem.\ 3.5]{hogeroehrle:indfree}.  
In principle, one might have to search through
all possible chains of free subarrangements.
The notion of recursive freeness is even more 
elusive.
In that sense, Theorem \ref{thm:local} 
can serve to be a very useful tool in 
deciding that a given arrangement is not 
inductively or recursively free by 
exhibiting a small localization which 
is known to lack this property.

\medskip

In his seminal work \cite{ziegler:multiarrangements}, Ziegler 
introduced the notion of multiarrangements and initiated the study of their 
freeness.  
The question of freeness of 
multiarrangements 
is a very active field of research, 
e.g.~see \cite{yoshinaga:free14}.
In their ground breaking work 
\cite[Thm.\ 0.8]{abeteraowakefield:euler}, Abe, Terao and Wakefield
proved the 
Addition-Deletion Theorem for multiarrangements.

The class of free multiarrangements
is known to be closed under taking localizations,
see Theorem \ref{thm:multi}. 
Our third main result shows that 
localization also preserves the notions of 
inductive and recursive freeness in the setting of 
multiarrangements.
For this purpose, let 
$\CIFM$ and  $\CRFM$ 
denote the classes of
inductively free and recursively free 
multiarrangements, see
Definitions \ref{def:indfree} and \ref{def:recfree}.

\begin{theorem}
\label{thm:localmulti}
The classes $\CIFM$ and $\CRFM$ are
closed under taking localizations.
\end{theorem}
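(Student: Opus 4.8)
The plan is to establish Theorem~\ref{thm:localmulti} by mirroring the strategy used for Theorem~\ref{thm:local} in the simple arrangement setting, transferring the key inductive constructions to the multiarrangement framework. First I would set up notation: fix a multiarrangement $(\CA, m)$ with multiplicity function $m$, and a localization $(\CA_X, m_X)$ obtained by restricting to the hyperplanes containing a flat $X$ of the intersection lattice, with $m_X$ the induced multiplicity. The goal is to show that membership in $\CIFM$ (resp.\ $\CRFM$) passes from $(\CA,m)$ to $(\CA_X, m_X)$. The natural approach is induction on the number of hyperplanes in $\CA$, unwinding the defining chain of additions and deletions that witnesses inductive (or recursive) freeness.

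The central step is to understand how the Addition-Deletion operations of Abe-Terao-Wakefield interact with localization. Given a witnessing triple $(\CA, \CA', \CA'')$ for one step of the inductive chain --- where $\CA''$ is a Ziegler restriction and $\CA'$ is a deletion --- I would show that localizing at $X$ produces a compatible triple $(\CA_X, \CA'_X, \CA''_X)$ for the localized multiarrangement, so that the exponents and the Addition-Deletion hypotheses are preserved under localization. The key structural fact is that localization commutes suitably with both deletion (removing a hyperplane) and Ziegler restriction (passing to a hyperplane with its induced multiplicity); one must check that if the distinguished hyperplane $H$ in the triple contains $X$, then localizing yields a genuine one-step reduction of $(\CA_X, m_X)$, whereas if $H$ does not contain $X$, then $H$ simply disappears under localization and the step becomes trivial, leaving $(\CA_X, m_X)$ unchanged. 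Carefully bookkeeping these two cases lets the inductive chain for $(\CA,m)$ be pushed forward to a (possibly shorter) chain for $(\CA_X, m_X)$.

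For the recursive case one argues analogously, except that the defining operations for $\CRFM$ also allow \emph{additions} that build larger multiarrangements, so the chain is no longer monotone in the number of hyperplanes or total multiplicity. Here I would verify that each permitted recursive operation --- addition, deletion, and restriction --- has a well-defined localized counterpart, and that the empty multiarrangement and the base cases remain in $\CRFM_X$ after localization. The formal induction is then on the length of the recursive construction sequence rather than on the size of $\CA$, showing at each stage that localizing the sequence yields a valid recursive construction sequence for the localized multiarrangement.

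I expect the main obstacle to be the verification that Ziegler restriction is compatible with localization, that is, that $(\CA''_H)_X$ agrees with the Ziegler restriction of the localized multiarrangement $(\CA_X, m_X)$ to the appropriate hyperplane, together with the matching of exponents. Unlike the simple arrangement case, the multiplicities complicate the restriction data, and one must confirm that the induced multiplicity from the Ziegler restriction and the induced multiplicity from localization commute. Once this compatibility lemma is in place, the inductive and recursive arguments proceed formally, and the conclusion $(\CA_X, m_X) \in \CIFM$ (resp.\ $\CRFM$) follows by tracing the localized chain back to its base case.
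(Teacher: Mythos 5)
Your plan follows the same route as the paper's proof: localize every term of a witnessing chain, drop the steps whose distinguished hyperplane fails to contain $X$, and rest the whole argument on the compatibility of localization with deletion and with restriction (the paper's Lemma \ref{lem:euler}; note that the restriction in the Abe--Terao--Wakefield triple carries the \emph{Euler} multiplicity $\nu^*$, not Ziegler's canonical multiplicity $\kappa$, although the compatibility you ask for does hold because $\nu^*(Y)$ depends only on the rank-two localization $(\CA_Y,\nu_Y)$, which is unchanged when one first localizes at $X$). However, two essential points are left unresolved. First, to certify that the localized chain satisfies Definition \ref{def:indfree}(ii) you must know that each localized restriction $((\CA_i'')_X,(\nu_i^*)_X)$ is itself inductively free; this is precisely the statement of the theorem applied to the smaller multiarrangement $(\CA_i'',\nu_i^*)$, so the proof requires an outer induction to which these restrictions are subject. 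The paper inducts on the rank (with rank at most $2$ as base case via Ziegler's result), which applies cleanly since $r(\CA_i'')<r(\CA)$; your proposed induction on the number of hyperplanes, ``unwinding the chain,'' is pitched at the chain itself and never states that the induction hypothesis is invoked for the restrictions, which is where the argument actually needs it.

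Second, you flag ``the matching of exponents'' as the expected obstacle but do not say how to overcome it, and it is not automatic: the exponents of $(\CA_{i,X},\nu_{i,X})$ bear no simple relation to those of $(\CA_i,\nu_i)$, so the containment $\exp(\CA_{i,X}'',\nu_{i,X}^*)\subseteq\exp(\CA_{i,X}',\nu_{i,X}')$ cannot be read off from the original chain. The paper's resolution is to first quote the fact that freeness of multiarrangements is preserved under localization (Theorem \ref{thm:multi}), so that both $(\CA_{i,X},\nu_{i,X})$ and its deletion $(\CA_{i-1,X},\nu_{i-1,X})$ are free; by \cite[Thm.\ 0.4]{abeteraowakefield:euler} (see Remark \ref{rem:restriction}) their exponents then automatically differ by $1$ in a single entry and the restriction is free with the complementary exponents, which yields the required containment. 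Without this input your localized triples need not visibly satisfy the hypotheses of the addition (or deletion) part of Theorem \ref{thm:add-del}. The recursive case needs the same two ingredients plus the observation that the localized chain begins with an inductive segment before any multiplicity is decreased, which your sketch does not address.
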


Theorem \ref{thm:local} follows for $\CIF$ and $\RF$ 
from 
Theorem \ref{thm:localmulti} as a special case, cf.~Remark \ref{rem:indfree}.

It follows from \cite[Lem.\ 1.3]{abeteraowakefield:euler} 
that a product of multiarrangements is free if and only if 
each factor is free. 
Armed with Theorem \ref{thm:localmulti}, we can readily 
extend this further to the classes
$\CIFM$ and $\CRFM$.

\begin{theorem}
\label{thm:recfreep}
A product of multiarrangements belongs to 
$\CIFM$ (resp.\ $\CRFM$) if and only if 
each factor belongs to $\CIFM$ (resp.\ $\CRFM$).
\end{theorem}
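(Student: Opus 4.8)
The plan is to prove the two implications separately and to run both classes, $\CIFM$ and $\CRFM$, through the same machinery. For the ``only if'' direction I would exhibit each factor as a localization of the product. Writing $V=V_1\oplus V_2$ and $(\CA,m)=(\CA_1,m_1)\times(\CA_2,m_2)$, localizing $(\CA,m)$ at the flat $V_1\oplus\{0\}$ retains exactly the hyperplanes $V_1\oplus H'$ with $H'\in\CA_2$ and discards every $H\oplus V_2$, so this localization is $(\CA_2,m_2)$ up to a trivial empty factor on $V_1$; symmetrically the flat $\{0\}\oplus V_2$ recovers $(\CA_1,m_1)$. Since membership in $\CIFM$ and $\CRFM$ is unaffected by a product with an empty multiarrangement and is preserved under localization by Theorem \ref{thm:localmulti}, membership of the product forces membership of each factor.

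For the ``if'' direction the crux is a compatibility lemma between the product and the Abe--Terao--Wakefield addition--deletion operation. Fixing $H_0\in\CA_1$ and writing $\hat H_0=H_0\oplus V_2$, I would show that the deletion of $(\CA,m)$ at $\hat H_0$ is $(\CA_1,m_1')\times(\CA_2,m_2)$, where $m_1'$ decreases $m_1(H_0)$ by one, and that the Euler restriction $(\CA^{\hat H_0},m^*)$ equals $(\CA_1^{H_0},m_1^*)\times(\CA_2,m_2)$, with $m_1^*$ the Euler multiplicity of $(\CA_1,m_1)$ along $H_0$. The deletion identity is immediate from the definition of the product. The restriction identity is the main obstacle. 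Since the Euler multiplicity at a rank-two flat $X$ of $\CA^{\hat H_0}$ is computed from the localization $(\CA_X,m_X)$, I must analyze two cases. If $X=(H_0\cap H)\oplus V_2$ for some $H\in\CA_1$, then $\CA_X$ involves only factor-one hyperplanes and $m^*(X)$ agrees with the Euler multiplicity of $(\CA_1,m_1)$ at $H_0\cap H$. If $X=H_0\oplus H''$ for $H''\in\CA_2$, then $\CA_X$ is the rank-two product of $(\{H_0\},m_1(H_0))$ with $(\{H''\},m_2(H''))$, which is free with exponents $(m_1(H_0),m_2(H''))$, and I would verify that the Euler multiplicity computed along $\hat H_0$ equals $m_2(H'')$. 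Together these identify $m^*$ on the factor-two part with $m_2$, establishing the lemma.

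Granting the compatibility lemma, I would prove the ``if'' direction by induction on the total multiplicity $|m_1|+|m_2|$, the base case being an empty factor. For $\CIFM$, choose a hyperplane $H_0$ realizing the inductive freeness of a factor; by the lemma both the deletion and the restriction of the product at $\hat H_0$ are products of inductively free multiarrangements of strictly smaller total multiplicity, hence lie in $\CIFM$ by induction. Because the exponents of a product are the multiset union of the factor exponents \cite[Lem.\ 1.3]{abeteraowakefield:euler}, the addition--deletion exponent relation for the factor lifts verbatim to the product, so the Addition Theorem \cite[Thm.\ 0.8]{abeteraowakefield:euler} places $(\CA,m)$ in $\CIFM$. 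For $\CRFM$ I would transport the recursive construction chains of the factors in turn: holding $(\CA_2,m_2)$ fixed and lifting the chain for $(\CA_1,m_1)$ reduces the product to one of the form $(\text{free})\times(\CA_2,m_2)$, and then lifting the chain for $(\CA_2,m_2)$ reduces further to $(\text{free})\times(\text{free})$, which is free by \cite[Lem.\ 1.3]{abeteraowakefield:euler}; since each lifted addition, deletion, or restriction step is a legitimate recursive operation on the product by the same compatibility lemma, $(\CA,m)$ is recursively free.

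The hard part throughout is the Euler-multiplicity computation in the restriction identity, since $m^*$ is direction dependent and must be read off the rank-two localizations in each case; the exponent bookkeeping and the passage through the multiarrangement Addition--Deletion Theorem are then routine, and the overall argument parallels the treatment of $\CIF$ and $\HIF$ in \cite[Prop.\ 2.10, Cor.\ 2.12]{hogeroehrle:indfree} and of $\RF$ in Theorem \ref{thm:recfreeproducts}.
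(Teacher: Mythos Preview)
Your approach matches the paper's. For the ``only if'' direction both you and the paper localize and invoke Theorem~\ref{thm:localmulti}; the paper uses the lattice elements $T_{\CA_1}\oplus V_2$ and $V_1\oplus T_{\CA_2}$, which induce exactly the same localizations as your subspaces $\{0\}\oplus V_2$ and $V_1\oplus\{0\}$. For the ``if'' direction the paper merely writes ``straightforward, cf.\ \cite[Prop.~2.10]{hogeroehrle:indfree}'', whereas you supply what that citation hides in the multiarrangement setting: the product/Euler-multiplicity compatibility lemma. Your case analysis of the rank-two flats of the product is correct, and in particular the computation $\nu^*(H_0\oplus H'')=m_2(H'')$ for the ``mixed'' flats (via the basis $\alpha_0^{m_1(H_0)}\partial_{\alpha_0},\,\beta^{m_2(H'')}\partial_\beta$ of the boolean rank-two localization) is the crux.

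One gap remains in your $\CRFM$ argument. When you lift a step of the recursive chain for $(\CA_1,m_1)$ to the product, Definition~\ref{def:recfree} requires the restriction at that step to already lie in $\CRFM$. Your compatibility lemma identifies this restriction as a product $(\CA_{1,i}^{H_i},(m_{1,i})^*)\times(\CA_2,m_2)$ of two recursively free factors, but concluding that \emph{this} product is recursively free is precisely the ``if'' implication you are proving. Your stated induction on $|m_1|+|m_2|$ does not rescue this, because along a recursive chain the intermediate $|m_{1,i}|$ may well exceed $|m_1|$. The fix is to run the entire ``if'' argument by induction on the ambient dimension $\ell=\ell_1+\ell_2$ (equivalently on rank): every restriction lives in dimension $\ell-1$, so the induction hypothesis applies and delivers recursive freeness of the restricted product. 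With that single adjustment your proof goes through; for $\CIFM$ your induction on total multiplicity is already sufficient, since there $|m_1^*|<|m_1|$ follows from the exponent containment.
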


Theorem \ref{thm:recfreeproducts} follows from 
Theorem \ref{thm:recfreep} for $\CRFM$ as a special case.

\medskip

In Section \ref{sec:appl} we further demonstrate the versatility of 
Theorem \ref{thm:localmulti} by showing that 
certain multiarrangements stemming 
from complex reflection groups are not inductively 
free.
Among them are multiarrangements of a restricted arrangement
equipped with Ziegler's natural multiplicity 
on the restriction to a hyperplane, 
see Definition \ref{def:zieglermulti}.

For applications of 
Theorem \ref{thm:local} for $\RF$ and 
Theorem \ref{thm:recfreeproducts}
in the context of the 
classification of recursively free reflection arrangements,
see \cite{muecksch:recfree}.

\section{Recollections and Preliminaries}

\subsection{Hyperplane Arrangements}
\label{ssect:hyper}
Let $V = \BBK^\ell$ 
be an $\ell$-dimensional $\BBK$-vector space.
A \emph{hyperplane arrangement} is a pair
$(\CA, V)$, where $\CA$ is a finite collection of hyperplanes in $V$.
Usually, we simply write $\CA$ in place of $(\CA, V)$.
We write $|\CA|$ for the number of hyperplanes in $\CA$.
The empty arrangement in $V$ is denoted by $\Phi_\ell$.

The \emph{lattice} $L(\CA)$ of $\CA$ is the set of subspaces of $V$ of
the form $H_1\cap \dotsm \cap H_i$ where $\{ H_1, \ldots, H_i\}$ is a subset
of $\CA$. 
For $X \in L(\CA)$, we have two associated arrangements, 
firstly
$\CA_X :=\{H \in \CA \mid X \subseteq H\} \subseteq \CA$,
the \emph{localization of $\CA$ at $X$}, 
and secondly, 
the \emph{restriction of $\CA$ to $X$}, $(\CA^X,X)$, where 
$\CA^X := \{ X \cap H \mid H \in \CA \setminus \CA_X\}$.
Note that $V$ belongs to $L(\CA)$
as the intersection of the empty 
collection of hyperplanes and $\CA^V = \CA$. 
The lattice $L(\CA)$ is a partially ordered set by reverse inclusion:
$X \le Y$ provided $Y \subseteq X$ for $X,Y \in L(\CA)$.

If $0 \in H$ for each $H$ in $\CA$, then 
$\CA$ is called \emph{central}.
If $\CA$ is central, then the \emph{center} 
$T_\CA := \cap_{H \in \CA} H$ of $\CA$ is the unique
maximal element in $L(\CA)$  with respect
to the partial order.
We have a \emph{rank} function on $L(\CA)$: $r(X) := \codim_V(X)$.
The \emph{rank} $r := r(\CA)$ of $\CA$ 
is the rank of a maximal element in $L(\CA)$.
The $\ell$-arrangement $\CA$ is \emph{essential} 
provided $r(\CA) = \ell$.
If $\CA$ is central and essential, then $T_\CA =\{0\}$.
Throughout, we only consider central arrangements.

More generally, for $U$ an arbitrary subspace of $V$, we can define  
$\CA_U :=\{H \in \CA \mid U \subseteq H\} \subseteq \CA$, the 
\emph{localization of $\CA$ at $U$}, 
and 
$\CA^U := \{ U \cap H \mid H \in \CA \setminus \CA_U\}$,
a subarrangement in $U$.
The following observations are immediate
from the definitions, cf.~\cite[\S 2]{orlikterao:arrangements}.

\begin{lemma}
\label{lem:swap}
Let $\CB \subseteq \CA$ be a subarrangement and 
$Y \le X$ in  $L(\CA)$.
Then we have
\begin{itemize}
\item[(i)]
$\CB \cap \CA_X = \CB_X$; and
\item[(ii)]
$(\CB_X)^Y = (\CB^Y)_X$.
\end{itemize}
Note that $X$ and $Y$ need not be members of $L(\CB)$.
\end{lemma}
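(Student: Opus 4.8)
The plan is to verify both identities by directly unravelling the definitions of localization and restriction; no substantial input is needed beyond correctly translating the order relation $Y \le X$ in $L(\CA)$ into the set-theoretic containment $X \subseteq Y$. For part~(i), I would argue purely set-theoretically: since $\CB \subseteq \CA$, a hyperplane $H$ lies in $\CB \cap \CA_X$ exactly when $H \in \CB$ and $X \subseteq H$, which is precisely the defining condition for membership in $\CB_X$. Hence $\CB \cap \CA_X = \CB_X$ is immediate.

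For part~(ii), I would first record that $Y \le X$ means $X \subseteq Y$, and then compute each side. On the left, the restriction $(\CB_X)^Y$ deletes from $\CB_X$ the hyperplanes of $(\CB_X)_Y$; using $X \subseteq Y$ one sees $(\CB_X)_Y = \CB_Y$ (if $Y \subseteq H$ then $X \subseteq Y \subseteq H$ automatically), so unwinding gives
\[
(\CB_X)^Y = \{\, Y \cap H \mid H \in \CB,\ X \subseteq H,\ Y \not\subseteq H \,\}.
\]
On the right, a hyperplane of $\CB^Y$ has the form $Y \cap H$ with $H \in \CB$ and $Y \not\subseteq H$, and it belongs to the localization $(\CB^Y)_X$ exactly when $X \subseteq Y \cap H$. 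Because $X \subseteq Y$, this condition reduces to $X \subseteq H$, whence
\[
(\CB^Y)_X = \{\, Y \cap H \mid H \in \CB,\ X \subseteq H,\ Y \not\subseteq H \,\},
\]
which coincides with the set obtained for the left-hand side. This establishes (ii).

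The only point requiring any care — and the closest thing to an obstacle — is keeping the order of the two operations straight and confirming that the containment $X \subseteq Y \cap H$ collapses to $X \subseteq H$. Both the identification of the deleted hyperplanes and this reduction rest on the single fact $X \subseteq Y$ coming from $Y \le X$; once that is in place, the two descriptions match term by term and the lemma follows.
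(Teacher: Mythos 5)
Your proof is correct and is exactly the definition-unwinding the paper has in mind: the authors state this lemma without proof, calling it ``immediate from the definitions,'' and your verification (translating $Y \le X$ to $X \subseteq Y$, identifying $(\CB_X)_Y = \CB_Y$, and collapsing $X \subseteq Y \cap H$ to $X \subseteq H$) supplies precisely the omitted details.
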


\subsection{Free Hyperplane Arrangements}
Let $S = S(V^*)$ be the symmetric algebra of the dual space $V^*$ of $V$.
If $x_1, \ldots , x_\ell$ is a basis of $V^*$, then we identify $S$ with 
the polynomial ring $\BBK[x_1, \ldots , x_\ell]$.
Letting $S_p$ denote the $\BBK$-subspace of $S$
consisting of the homogeneous polynomials of degree $p$ (along with $0$),
$S$ is naturally $\BBZ$-graded: $S = \oplus_{p \in \BBZ}S_p$, where
$S_p = 0$ in case $p < 0$.

Let $\Der(S)$ be the $S$-module of algebraic $\BBK$-derivations of $S$.
Using the $\BBZ$-grading on $S$, $\Der(S)$ becomes a graded $S$-module.
For $i = 1, \ldots, \ell$, 
let $D_i := \partial/\partial x_i$ be the usual derivation of $S$.
Then $D_1, \ldots, D_\ell$ is an $S$-basis of $\Der(S)$.
We say that $\theta \in \Der(S)$ is 
\emph{homogeneous of polynomial degree p}
provided 
$\theta = \sum_{i=1}^\ell f_i D_i$, 
where $f_i$ is either $0$ or homogeneous of degree $p$
for each $1 \le i \le \ell$.
In this case we write $\pdeg \theta = p$.

Let $\CA$ be an arrangement in $V$. 
Then for $H \in \CA$ we fix $\alpha_H \in V^*$ with
$H = \ker(\alpha_H)$.
The \emph{defining polynomial} $Q(\CA)$ of $\CA$ is given by 
$Q(\CA) := \prod_{H \in \CA} \alpha_H \in S$.

The \emph{module of $\CA$-derivations} of $\CA$ is 
defined by 
\[
D(\CA) := \{\theta \in \Der(S) \mid \theta(\alpha_H) \in \alpha_H S
\text{ for each } H \in \CA \} .
\]
We say that $\CA$ is \emph{free} if the module of $\CA$-derivations
$D(\CA)$ is a free $S$-module.

With the $\BBZ$-grading of $\Der(S)$, 
also $D(\CA)$ 
becomes a graded $S$-module,
\cite[Prop.\ 4.10]{orlikterao:arrangements}.
If $\CA$ is a free arrangement, then the $S$-module 
$D(\CA)$ admits a basis of $\ell$ homogeneous derivations, 
say $\theta_1, \ldots, \theta_\ell$, \cite[Prop.\ 4.18]{orlikterao:arrangements}.
While the $\theta_i$'s are not unique, their polynomial 
degrees $\pdeg \theta_i$ 
are unique (up to ordering). This multiset is the set of 
\emph{exponents} of the free arrangement $\CA$
and is denoted by $\exp \CA$.

Recall the class $\CIF$ of 
inductively free arrangements (\cite[Def.\ 4.53]{orlikterao:arrangements}).
There is an even stronger notion of freeness,
cf.\  \cite[\S 6.4, p.~253]{orlikterao:arrangements}.

\begin{defn}
\label{def:heredindfree}
The arrangement $\CA$ is called 
\emph{hereditarily inductively free} provided 
$\CA^X$ is inductively free for each $X \in L(\CA)$.
We abbreviate this class by $\HIF$.
\end{defn}

As $V \in L(\CA)$ and $\CA^V = \CA$, 
$\CA$ is inductively free, if it is 
hereditarily inductively free.
Also, $\HIF$ is a proper subclass of 
$\CIF$, see \cite[Ex.\ 2.16]{hogeroehrle:indfree}.

Let $U \subseteq V$ be a subspace of $V$.
Thanks to work of Terao, 
\cite[Prop.\ 5.5]{terao:generalizedexponents},  
\cite[Prop.\ 2]{terao:freefinitefields}, 
$\CA_U$ is free whenever $\CA$ is,
cf.~\cite[Thm.\ 1.7(i)]{ziegler:matroids},
\cite[Thm.\ 4.37]{orlikterao:arrangements},
or \cite[Prop.\ 1.15]{yoshinaga:free14}.

\subsection{Multiarrangements}
\label{ssec:multi}
A \emph{multiarrangement}  is a pair
$(\CA, \nu)$ consisting of a hyperplane arrangement $\CA$ and a 
\emph{multiplicity} function
$\nu : \CA \to \BBZ_{\ge 0}$ associating 
to each hyperplane $H$ in $\CA$ a non-negative integer $\nu(H)$.
Alternately, the multiarrangement $(\CA, \nu)$ can also be thought of as
the multiset of hyperplanes
\[
(\CA, \nu) = \{H^{\nu(H)} \mid H \in \CA\}.
\]

The \emph{order} of the multiarrangement $(\CA, \nu)$ 
is the cardinality 
of the multiset $(\CA, \nu)$; we write 
$|\nu| := |(\CA, \nu)| = \sum_{H \in \CA} \nu(H)$.
For a multiarrangement $(\CA, \nu)$, the underlying 
arrangement $\CA$ is sometimes called the associated 
\emph{simple} arrangement, and so $(\CA, \nu)$ itself is  
simple if and only if $\nu(H) = 1$ for each $H \in \CA$.

\begin{defn}
\label{def:submulti}
Let $\nu_i$ be a multiplicity of $\CA_i$ for $ i = 1,2$.
When viewed as multisets, suppose that 
$(\CA_1, \nu_1)$ is a subset of $(\CA_2, \nu_2)$.
Then we say that $(\CA_1, \nu_1)$ is a 
\emph{submultiarrangement} of $(\CA_2, \nu_2)$ and write 
$(\CA_1, \nu_1) \subseteq (\CA_2, \nu_2)$,
i.e.\ we have $\nu_1(H) \le \nu_2(H)$ for each $H \in \CA_1$.
\end{defn}

\begin{defn}
\label{def:localization}
Let $(\CA, \nu)$ be a multiarrangement in $V$ and let 
$U \subseteq V$ be a subspace of $V$. The 
\emph{localization of $(\CA, \nu)$ at $U$} is $(\CA_U, \nu_U)$,
where $\nu_U = \nu |_{\CA_U}$.
Note that for $X = \cap_{H \in \CA_U} H$, we have 
$\CA_X = \CA_U$ and $X$ belongs to the 
intersection lattice of $\CA$. 
\end{defn}

\subsection{Freeness of  multiarrangements}

Following Ziegler \cite{ziegler:multiarrangements},
we extend the notion of freeness to multiarrangements as follows.
The \emph{defining polynomial} $Q(\CA, \nu)$ 
of the multiarrangement $(\CA, \nu)$ is given by 
\[
Q(\CA, \nu) := \prod_{H \in \CA} \alpha_H^{\nu(H)},
\] 
a polynomial of degree $|\nu|$ in $S$.

The \emph{module of $\CA$-derivations} of $(\CA, \nu)$ is 
defined by 
\[
D(\CA, \nu) := \{\theta \in \Der(S) \mid \theta(\alpha_H) \in \alpha_H^{\nu(H)} S 
\text{ for each } H \in \CA\}.
\]
We say that $(\CA, \nu)$ is \emph{free} if 
$D(\CA, \nu)$ is a free $S$-module, 
\cite[Def.\ 6]{ziegler:multiarrangements}.

As in the case of simple arrangements,
$D(\CA, \nu)$ is a $\BBZ$-graded $S$-module and 
thus, if $(\CA, \nu)$ is free, there is a 
homogeneous basis $\theta_1, \ldots, \theta_\ell$ of $D(\CA, \nu)$.
The multiset of the unique polynomial degrees $\pdeg \theta_i$ 
forms the set of \emph{exponents} of the free multiarrangement $(\CA, \nu)$
and is denoted by $\exp (\CA, \nu)$.
It follows from Ziegler's analogue of Saito's criterion 
\cite[Thm.\ 8]{ziegler:multiarrangements} that 
\begin{equation*}
\label{eq:exp}
\sum \pdeg \theta_i = \deg Q(\CA, \nu) = |\nu|. 
\end{equation*}

Freeness for multiarrangements is 
preserved under localizations, 
\cite[Prop.\ 1.7]{abenuidanumata}.
The argument in the 
proof of \cite[Thm.\ 4.37]{orlikterao:arrangements}
readily extends to this more general setting.

\begin{theorem}
\label{thm:multi}
For $U \subseteq V$ a subspace, the 
localization $(\CA_U, \nu_U)$ of $(\CA, \nu)$ at $U$ is free 
provided $(\CA, \nu)$ is free.
\end{theorem}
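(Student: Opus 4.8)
The plan is to imitate the localization argument for simple arrangements, \cite[Thm.\ 4.37]{orlikterao:arrangements}, \cite[Prop.\ 1.7]{abenuidanumata}, by passing to the localization of $S$ at the prime ideal of the subspace carrying $\CA_U$. First I would set $X := \bigcap_{H \in \CA_U} H$, so that $\CA_U = \CA_X$ and $\nu_U = \nu_X$ by Definition \ref{def:localization}, and let $\fp \subseteq S$ be the prime ideal of all polynomials vanishing on $X$. For every $H \in \CA \setminus \CA_X$ the linear form $\alpha_H$ does not vanish on $X$, so $\alpha_H^{\nu(H)}$ is a unit in $S_\fp$ and the defining condition $\theta(\alpha_H) \in \alpha_H^{\nu(H)} S_\fp$ becomes vacuous for such $H$. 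Since $D(\CA,\nu)$ is a kernel of a map of finitely generated modules and localization is exact, localizing the defining equations at $\fp$ yields
\[
D(\CA, \nu)_\fp = D(\CA_X, \nu_X)_\fp .
\]
As $(\CA,\nu)$ is free, $D(\CA,\nu)$ is a free $S$-module, so the left-hand side is free over $S_\fp$; hence $D(\CA_X, \nu_X)_\fp$ is free over $S_\fp$.

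The main obstacle is to lift freeness of this single localization back to freeness of the graded $S$-module $D(\CA_X,\nu_X)$, since $\fp$ is the maximal graded ideal only when $X$ is the origin. To handle this I would first reduce to the essential case. Choosing coordinates $x_1, \dots, x_\ell$ with $X = \{x_1 = \dots = x_r = 0\}$, where $r = \codim_V X = \rank \CA_X$, every $\alpha_H$ with $H \in \CA_X$ lies in $\langle x_1, \dots, x_r \rangle$, so $D_j(\alpha_H) = 0$ for $j > r$ and the defining conditions on $\theta = \sum_i f_i D_i$ constrain only $f_1,\dots,f_r$. This produces a splitting
\[
D(\CA_X, \nu_X) = \Big(\bigoplus_{j=r+1}^\ell S\, D_j\Big) \oplus \big(D(\CA', \nu') \otimes_{S'} S\big),
\]
where $S' = \BBK[x_1,\dots,x_r]$ and $(\CA',\nu')$ is the induced essential multiarrangement in $\BBK^r$; equivalently $(\CA_X,\nu_X)$ is the product of $(\CA',\nu')$ with the empty arrangement $\Phi_{\ell-r}$. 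Because $S' \hookrightarrow S$ is free, hence faithfully flat, $D(\CA_X,\nu_X)$ is free over $S$ if and only if $D(\CA',\nu')$ is free over $S'$, and by the same flatness the freeness of $D(\CA_X,\nu_X)_\fp$ descends to freeness of $D(\CA',\nu')_{\fm'}$ over $S'_{\fm'}$, where $\fm' = (x_1,\dots,x_r)$ is the maximal graded ideal of $S'$.

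Finally I would close with a graded Nakayama argument. Pick homogeneous elements of $D(\CA',\nu')$ lifting a $\BBK$-basis of $D(\CA',\nu')/\fm' D(\CA',\nu')$; they generate by graded Nakayama, giving a graded surjection $F \twoheadrightarrow D(\CA',\nu')$ from a free module $F$ of the correct rank, with kernel $K$. Localizing at $\fm'$, the induced surjection $F_{\fm'} \twoheadrightarrow D(\CA',\nu')_{\fm'}$ is a surjection of free $S'_{\fm'}$-modules of equal finite rank, hence an isomorphism, so $K_{\fm'} = 0$. Since $K$ is a graded submodule of a free module and $\fm'$ is the unique maximal graded ideal, a lowest-degree-term argument (any $s \notin \fm'$ has nonzero constant term, so cannot annihilate a nonzero homogeneous element) forces $K = 0$. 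Thus $D(\CA',\nu')$ is graded free, whence $D(\CA_U,\nu_U) = D(\CA_X,\nu_X)$ is free, as claimed. The delicate step throughout is exactly this descent from the single localization to the graded module, which is precisely why the reduction to the essential case—placing the relevant prime at the maximal graded ideal—is needed.
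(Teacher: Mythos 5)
Your proof is correct, but it takes a genuinely different route from the one the paper relies on. The paper proves Theorem \ref{thm:multi} by invoking \cite[Prop.\ 1.7]{abenuidanumata} and the argument of \cite[Thm.\ 4.37]{orlikterao:arrangements}: one picks a point $a \in X$ lying on no hyperplane of $\CA \setminus \CA_X$, translates a homogeneous basis $\theta_1, \ldots, \theta_\ell$ of $D(\CA,\nu)$ to $a$, takes lowest-degree homogeneous parts, and verifies via Ziegler's analogue of Saito's criterion that these initial forms give a basis of $D(\CA_X,\nu_X)$ --- a constructive argument producing an explicit basis. You instead argue module-theoretically: the identification $D(\CA,\nu)_\fp = D(\CA_X,\nu_X)_\fp$ at the prime $\fp$ of $X$ (both proofs hinge on the same observation, that $\alpha_H$ is a unit near a generic point of $X$ when $H \not\supseteq X$), then the product decomposition reducing to the essential case (which one could also quote directly from \cite[Lem.\ 1.3]{abeteraowakefield:euler}, cited in the paper), faithfully flat descent along $S'_{\fm'} \to S_\fp$, and finally graded Nakayama plus the lowest-degree-term argument to pass from freeness of the single localization at the irrelevant maximal ideal to graded freeness. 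All of these steps hold (note only that one should record that $D(\CA',\nu')$ is finitely generated so that Nakayama and descent of flatness apply). What each approach buys: the paper's argument yields explicit basis elements of $D(\CA_X,\nu_X)$, while yours is shorter on computation and makes structurally transparent why, as the paper remarks after the theorem, no control on $\exp(\CA_X,\nu_X)$ comes out of the proof --- the grading is only reconstituted in the very last step.
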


Though constructive,
the proof of Theorem \ref{thm:multi} does not shed any 
light on the exponents of $(\CA_U, \nu_U)$ in relation to the
exponents of $(\CA, \nu)$. We do however have the following 
elementary observation. 

\begin{remark}
\label{rem:multiexponents}
Let $(\CA_1, \nu_1) \subseteq (\CA_2, \nu_2)$
be free multiarrangements with 
ordered sets of exponents
$\exp (\CA_i, \nu_i) = \{a_{i,1} \le \ldots \le a_{i,\ell}\}$
for $i = 1,2$.
Then $a_{1,j} \le a_{2,j}$ for each $1 \le j \le \ell$.
For, let $\{\theta_{i,1}, \ldots ,\theta_{i,\ell}\}$ 
be a homogeneous $S$-basis
of the free $S$-module $D(\CA_i, \nu_i)$
for $i = 1,2$.
For a contradiction, suppose that $k$ is the 
smallest index such that $a_{1,k} > a_{2,k}$. 
Then the grading of both $S$-modules and the fact that 
$D(\CA_2, \nu_2) \subseteq D(\CA_1, \nu_1)$ imply that 
$\theta_{2,1}, \ldots, \theta_{2,k} \in S\theta_{1,1} + \ldots + S \theta_{1,k-1}$.
But this shows that $\{\theta_{2,1}, \ldots ,\theta_{2,\ell}\}$ is
not algebraically independent over $S$, a contradiction.
\end{remark}

We recall a fundamental 
construction 
due to Ziegler, \cite[Ex.\ 2]{ziegler:multiarrangements}.

\begin{defn}
\label{def:zieglermulti}
Let $\CA$ be a simple arrangement.
Fix $H_0 \in \CA$ and  consider the restriction 
$\CA''$ with respect to $H_0$.
Define the \emph{canonical multiplicity} 
$\kappa$ on $\CA''$ as follows. For $Y \in \CA''$ set 
\[
\kappa(Y) := |\CA_Y| -1,
\]
i.e., $\kappa(Y)$ is the number of hyperplanes in $\CA \setminus\{H_0\}$
lying above $Y$.
Ziegler showed that freeness of $\CA$ implies 
freeness of $(\CA'', \kappa)$ as follows.
\end{defn}

\begin{theorem}
[{\cite[Thm.\ 11]{ziegler:multiarrangements}}]
\label{thm:zieglermulti}
Let $\CA$ be a free arrangement with exponents
$\exp \CA = \{1, e_2, \ldots, e_\ell\}$.
Let $H_0 \in \CA$ and consider the restriction 
$\CA''$ with respect to $H_0$.
Then the multiarrangement $(\CA'', \kappa)$ is free with
exponents
$\exp (\CA'', \kappa) = \{e_2, \ldots, e_\ell\}$. 
\end{theorem}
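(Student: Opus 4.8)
The plan is to work directly from the defining data of the two modules and exhibit an explicit homogeneous basis of $D(\CA'', \kappa)$ built out of a chosen basis of $D(\CA)$. Since $\CA$ is free with $\exp \CA = \{1, e_2, \ldots, e_\ell\}$, Saito's criterion gives a homogeneous basis $\theta_1, \ldots, \theta_\ell$ of $D(\CA)$, and after reordering we may take $\pdeg \theta_1 = 1$. Because $\CA$ is central, the Euler derivation $\theta_E = \sum_{i} x_i D_i$ lies in $D(\CA)$ and has polynomial degree $1$; I would first arrange coordinates so that $H_0 = \ker(x_\ell)$ and then normalize the basis so that $\theta_1 = \theta_E$. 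The key structural point is that $D(\CA) \subseteq D(\CA_{H_0}^{\phantom{H}})$ in a way that lets one restrict derivations to the hyperplane $H_0$: writing $S'' = S/(x_\ell)$ for the coordinate ring of $H_0$, each $\theta \in D(\CA)$ induces, after discarding its $D_\ell$-component and reducing coefficients modulo $x_\ell$, a derivation $\bar\theta$ of $S''$.

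The central step is to verify that this restriction map $\theta \mapsto \bar\theta$ actually lands in $D(\CA'', \kappa)$ with the prescribed multiplicity. Fix $Y \in \CA''$, so $Y = H_0 \cap H$ for hyperplanes $H \in \CA \setminus \{H_0\}$ meeting $H_0$ along $Y$; the hyperplanes of $\CA$ through $Y$ other than $H_0$ number exactly $\kappa(Y)$. The defining form $\alpha_Y$ of $Y$ inside $H_0$ is the common restriction $\bar\alpha_H$ of each such $\alpha_H$. One then shows that for $\theta \in D(\CA)$ the condition $\theta(\alpha_H) \in \alpha_H S$ for every $H$ through $Y$, combined with the fact that these $\alpha_H$ all reduce modulo $x_\ell$ to scalar multiples of $\alpha_Y$, forces $\bar\theta(\alpha_Y) \in \alpha_Y^{\kappa(Y)} S''$. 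This is the heart of the matter and the step I expect to be the main obstacle: it is a local computation comparing the order of vanishing of $\theta(\alpha_H)$ along $Y$ for the various $H$, and getting the exponent to come out to exactly $\kappa(Y) = |\CA_Y| - 1$ (rather than $|\CA_Y|$) is precisely where the deletion of $H_0$ and the drop by one enter.

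With that inclusion established, I would show that $\bar\theta_2, \ldots, \bar\theta_\ell$ form a homogeneous $S''$-basis of $D(\CA'', \kappa)$ via Ziegler's analogue of Saito's criterion \cite[Thm.\ 8]{ziegler:multiarrangements}. The polynomial degrees are immediate: $\pdeg \bar\theta_i = \pdeg \theta_i = e_i$ for $i \ge 2$, since reduction modulo $x_\ell$ preserves homogeneity and the degrees $e_i$ survive (here one needs that none of the $\bar\theta_i$ degenerates, which is why $\theta_1 = \theta_E$ is singled out and dropped). To apply Saito it suffices to check that the $(\ell-1) \times (\ell-1)$ coefficient matrix of $\bar\theta_2, \ldots, \bar\theta_\ell$ has determinant equal, up to a nonzero scalar, to $Q(\CA'', \kappa) = \prod_{Y \in \CA''} \alpha_Y^{\kappa(Y)}$, whose degree is $\sum_{Y} \kappa(Y) = e_2 + \cdots + e_\ell$ by the degree count in \eqref{eq:exp}. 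The determinant of the full $\ell \times \ell$ basis matrix for $D(\CA)$ equals $Q(\CA) = x_\ell \prod_{H \ne H_0} \alpha_H$ up to a scalar by Saito's criterion for $\CA$; expanding this determinant along the Euler-derivation row and reducing modulo $x_\ell$ then matches the degree and vanishing pattern required, so the reduced determinant is a nonzero multiple of $Q(\CA'', \kappa)$ and Ziegler's criterion closes the argument.
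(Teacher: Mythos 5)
The paper does not actually prove this statement---it is quoted verbatim from Ziegler \cite[Thm.~11]{ziegler:multiarrangements}---so there is no internal proof to compare against; I will therefore assess your outline on its own terms. Its architecture is the standard one (restrict a basis of $D(\CA)$ to $H_0$, verify membership in $D(\CA'',\kappa)$ by a local rank-two computation at each $Y\in\CA''$, and close with Ziegler's Saito-type criterion), but there is a genuine gap at precisely the step you flag as the heart of the matter. The claim that for \emph{every} $\theta\in D(\CA)$ the conditions $\theta(\alpha_H)\in\alpha_H S$ for $H\in\CA_Y$ force $\bar\theta(\alpha_Y)\in\alpha_Y^{\kappa(Y)}S''$ is false: after reduction modulo $\alpha_0$ the forms $\alpha_H$, $H\in\CA_Y\setminus\{H_0\}$, all become proportional to $\alpha_Y$, so the individual divisibilities no longer multiply and one only obtains a single factor of $\alpha_Y$. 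A concrete counterexample: take $Q(\CA)=xyz(x-y)$ with $\exp\CA=\{1,1,2\}$, $H_0=\ker x$, and $\theta=x(x+z)\,\partial_x+y(y+z)\,\partial_y+z^2\,\partial_z$. One checks directly that $\theta\in D(\CA)$ is homogeneous of degree $2$ and that $\{\theta_E,\,\theta,\,z\partial_z\}$ is a homogeneous basis of $D(\CA)$ normalized exactly as you prescribe (it is obtained from the basis $\{\theta_E,\,x^2\partial_x+y^2\partial_y,\,z\partial_z\}$ by adding $z\theta_E$ to the middle element). Yet for $Y=H_0\cap\ker y$ one has $\kappa(Y)=2$ while $\bar\theta(y)=y(y+z)\notin y^2S''$, so $\bar\theta\notin D(\CA'',\kappa)$ and your proposed basis of the restriction does not even lie in the right module.

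The missing ingredient is that $\theta_2,\dots,\theta_\ell$ must be chosen in the annihilator $\Ann(H_0)=\{\theta\in D(\CA)\mid\theta(\alpha_0)=0\}$, not merely in some homogeneous basis containing $\theta_E$. The splitting $D(\CA)=S\theta_E\oplus\Ann(H_0)$---exactly the device the paper invokes in its proof of Proposition~\ref{prop:single}(i), and the place where the hypothesis $1\in\exp\CA$ is genuinely used---supplies a homogeneous basis $\theta_2,\dots,\theta_\ell$ of $\Ann(H_0)$ with degrees $e_2,\dots,e_\ell$. Once $\theta(\alpha_0)=0$, your local computation goes through, but it must be performed \emph{before} reducing modulo $\alpha_0$: writing $\alpha_H=a_H\alpha_0+b_H\beta$ with $b_H\neq0$ for each of the $\kappa(Y)$ hyperplanes $H\in\CA_Y\setminus\{H_0\}$, the condition $\theta(\alpha_H)=b_H\theta(\beta)\in\alpha_HS$ gives $\theta(\beta)\in\bigl(\prod_H\alpha_H\bigr)S$ by pairwise coprimality in $S$, and only then does reduction yield the full power $\alpha_Y^{\kappa(Y)}$. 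The same normalization is what rescues your determinant step: with $\theta_i(\alpha_0)=0$ for $i\ge2$ the Saito matrix of $\{\theta_E,\theta_2,\dots,\theta_\ell\}$ has a column equal to $\alpha_0$ times a coordinate vector, so its determinant is $\alpha_0$ times the single minor whose reduction is the Saito matrix of $\bar\theta_2,\dots,\bar\theta_\ell$; without it, ``expanding along the Euler row'' does not isolate that minor. With these repairs your outline becomes Ziegler's actual proof.
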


Note that 
the converse of Theorem \ref{thm:zieglermulti} is false.
For let $\CA$ be a non-free $3$-arrangement, 
cf.~\cite[Ex.\ 4.34]{orlikterao:arrangements}.
Since $\CA''$ is of rank $2$,
$(\CA'', \kappa)$ is free, 
\cite[Cor.\ 7]{ziegler:multiarrangements}.
Nevertheless, 
Ziegler's construction 
and in particular the question of a converse
of Theorem \ref{thm:zieglermulti}
under suitable additional hypotheses 
play an important role in the 
study of free simple arrangements, e.g.\
see
\cite[Thm.\ 2.1, Thm.\ 2.2]{yoshinaga:free04},
\cite{yoshinaga:free05},
\cite[Cor.\ 4.2]{abeyoshinaga},
\cite[Thm.\ 2]{schulze:free} and
\cite[Cor.\ 1.35]{yoshinaga:free14}.

\subsection{The Addition-Deletion Theorem for Multiarrangements}

We recall the construction from \cite{abeteraowakefield:euler}.

\begin{defn}
\label{def:Euler}
Let $(\CA, \nu) \ne \Phi_\ell$ be a multiarrangement. Fix $H_0$ in $\CA$.
We define the \emph{deletion}  $(\CA', \nu')$ and \emph{restriction} $(\CA'', \nu^*)$
of $(\CA, \nu)$ with respect to $H_0$ as follows.
If $\nu(H_0) = 1$, then set $\CA' = \CA \setminus \{H_0\}$
and define $\nu'(H) = \nu(H)$ for all $H \in \CA'$.
If $\nu(H_0) > 1$, then set $\CA' = \CA$
and define $\nu'(H_0) = \nu(H_0)-1$ and
$\nu'(H) = \nu(H)$ for all $H \ne H_0$.

Let $\CA'' = \{ H \cap H_0 \mid H \in \CA \setminus \{H_0\}\ \}$.
The \emph{Euler multiplicity} $\nu^*$ of $\CA''$ is defined as follows.
Let $Y \in \CA''$. Since the localization $\CA_Y$ is of rank $2$, the
multiarrangement $(\CA_Y, \nu_Y)$ is free, 
\cite[Cor.\ 7]{ziegler:multiarrangements}. 
According to 
\cite[Prop.\ 2.1]{abeteraowakefield:euler},
the module of derivations 
$D(\CA_Y, \nu_Y)$ admits a particular homogeneous basis
$\{\theta_Y, \psi_Y, D_3, \ldots, D_\ell\}$,
where $\theta_Y$ is identified by the 
property that $\theta_Y \notin \alpha_0 \Der(S)$
and $\psi_Y$ by the 
property that $\psi_Y \in \alpha_0 \Der(S)$,
where $H_0 = \ker \alpha_0$.
Then the Euler multiplicity $\nu^*$ is defined
on $Y$ as $\nu^*(Y) = \pdeg \theta_Y$.
Crucial for our purpose is the fact that the value $\nu^*(Y)$ 
only depends on the $S$-module $D(\CA_Y, \nu_Y)$.

Frequently, $(\CA, \nu), (\CA', \nu')$ and $(\CA'', \nu^*)$ 
is referred to as the \emph{triple} of multiarrangements with respect to $H_0$. 
\end{defn}

\begin{theorem}
[{\cite[Thm.\ 0.8]{abeteraowakefield:euler}}
Addition-Deletion-Theorem for Multiarrangements]
\label{thm:add-del}
Suppose that $(\CA, \nu) \ne \Phi_\ell$.
Fix $H_0$ in $\CA$ and 
let  $(\CA, \nu), (\CA', \nu')$ and  $(\CA'', \nu^*)$ be the triple with respect to $H_0$. 
Then any  two of the following statements imply the third:
\begin{itemize}
\item[(i)] $(\CA, \nu)$ is free with $\exp (\CA, \nu) = \{ b_1, \ldots , b_{\ell -1}, b_\ell\}$;
\item[(ii)] $(\CA', \nu')$ is free with $\exp (\CA', \nu') = \{ b_1, \ldots , b_{\ell -1}, b_\ell-1\}$;
\item[(iii)] $(\CA'', \nu^*)$ is free with $\exp (\CA'', \nu^*) = \{ b_1, \ldots , b_{\ell -1}\}$.
\end{itemize}
\end{theorem}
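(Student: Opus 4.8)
The plan is to deduce the three-way equivalence from a single exact sequence of graded $S$-modules linking the derivation modules of the triple, and then to read off freeness of the third member from that of the other two by combining Ziegler's analogue of Saito's criterion \cite[Thm.\ 8]{ziegler:multiarrangements} with the additivity $\sum \pdeg \theta_i = |\nu|$ of the exponents. First I would record the evident graded inclusion $D(\CA, \nu) \subseteq D(\CA', \nu')$, which holds because $\nu' \le \nu$ imposes weaker divisibility conditions, so that passing from $(\CA', \nu')$ to $(\CA, \nu)$ amounts to the single extra requirement that $\theta(\alpha_0)$ be divisible by one further power of $\alpha_0$ (or, when $\nu(H_0) = 1$, that $\theta$ be tangent to $H_0$). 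The substance lies in constructing a restriction map $\rho \colon D(\CA', \nu') \to D(\CA'', \nu^*)$ whose kernel is exactly $D(\CA, \nu)$, yielding the exact sequence
\[
0 \longrightarrow D(\CA, \nu) \longrightarrow D(\CA', \nu') \xrightarrow{\rho} D(\CA'', \nu^*).
\]

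Second, I would establish that $\rho$ is well defined, i.e.\ that the restriction to $H_0$ of a derivation in $D(\CA', \nu')$ really does satisfy the divisibility conditions prescribed by the Euler multiplicity $\nu^*$. Since each defining condition for $D(\CA'', \nu^*)$ is attached to a single $Y \in \CA''$ and involves only the hyperplanes of $\CA$ containing $Y$, this check is entirely local and reduces to the localization $(\CA_Y, \nu_Y)$, which has rank $2$ and is therefore free by \cite[Cor.\ 7]{ziegler:multiarrangements}. At this rank-$2$ level the explicit basis $\{\theta_Y, \psi_Y, D_3, \ldots, D_\ell\}$ of Definition \ref{def:Euler} is available, and the identity $\nu^*(Y) = \pdeg \theta_Y$ is designed precisely so that the restricted derivation acquires the exponent $\nu^*(Y)$ along $Y$. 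This local-to-global step is the conceptual heart of the argument: the Euler multiplicity is exactly the multiplicity for which $\rho$ lands in $D(\CA'', \nu^*)$.

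Third, I would run the numerical bookkeeping. In every case the exponent hypotheses are internally consistent: the degrees in (i)--(iii) force $\sum b_i = |\nu|$ and $|\nu'| = |\nu| - 1$, so any two of the freeness-with-exponents statements determine the graded ranks of all three modules. Granting freeness of two members, I would lift a homogeneous basis of the relevant quotient through $\rho$, combine it with a homogeneous basis of the kernel, and verify via Ziegler's criterion that the resulting $\ell$ derivations form a basis of the third module, the determinant of their coefficient matrix matching the defining polynomial up to a nonzero scalar precisely when the exponents add up as prescribed.

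I expect the main obstacle to be the surjectivity of $\rho$ onto $D(\CA'', \nu^*)$, equivalently the control of its cokernel. Well-definedness and exactness on the left are comparatively formal once the local rank-$2$ picture is in hand, but there is no reason a priori for an arbitrary derivation on $H_0$ to lift to $(\CA', \nu')$. The resolution must again be local-to-global and numerical: one shows that the cokernel is supported in codimension at least $2$ and then uses the freeness of the two given members, together with the exponent count, to rule it out. This is precisely the point at which the genuine technical work of \cite{abeteraowakefield:euler} is concentrated, and where the special structure of the Euler multiplicity is indispensable.
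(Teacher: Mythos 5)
First, note that the paper does not prove this statement at all: it is imported verbatim from \cite[Thm.\ 0.8]{abeteraowakefield:euler}, so the comparison below is against the argument in that source, whose broad strategy (an exact sequence linking the three derivation modules, the local rank-$2$ definition of $\nu^*$, and Ziegler's analogue of Saito's criterion with the degree count $\sum \pdeg\theta_i = |\nu|$) your sketch does correctly identify.

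However, your key exact sequence is set up incorrectly, and this is a genuine gap rather than a notational slip. The restriction map cannot be defined on $D(\CA', \nu')$: when $\nu(H_0)=1$ an element of $D(\CA',\nu')$ need not be tangent to $H_0$ (for $\CA=\{\ker x,\ker y\}$ with $\nu\equiv 1$ and $H_0=\ker x$, the derivation $\partial/\partial x$ lies in $D(\CA',\nu')$ but does not restrict to a derivation of $H_0$), and even when $\nu(H_0)\ge 2$ the kernel of restriction is $\alpha_0\Der(S)\cap D(\CA',\nu')$, which is not $D(\CA,\nu)$ (in the same example with $\nu(\ker x)=2$, the derivation $y\,\partial/\partial y$ lies in $D(\CA,\nu)$ but has nonzero restriction). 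The correct sequence of \cite[Thm.\ 0.6]{abeteraowakefield:euler} is
\[
0 \longrightarrow D(\CA', \nu') \xrightarrow{\ \cdot\,\alpha_0\ } D(\CA, \nu) \xrightarrow{\ \rho\ } D(\CA'', \nu^*),
\]
with $\rho$ the restriction defined on $D(\CA,\nu)$, whose elements are automatically tangent to $H_0$, and with kernel $\alpha_0 D(\CA',\nu')$. A Hilbert series check shows your version cannot be repaired: if all three members are free with the exponents of (i)--(iii), the quotient $D(\CA',\nu')/D(\CA,\nu)$ would have Poincar\'e series $\bigl(t^{b_\ell-1}-t^{b_\ell}\bigr)/(1-t)^{\ell}$, which is not the series of any graded module, whereas $D(\CA,\nu)/\alpha_0 D(\CA',\nu')$ has series $\bigl(\sum_{i<\ell} t^{b_i}\bigr)/(1-t)^{\ell-1}$, exactly matching a free $S/\alpha_0 S$-module with exponents $\{b_1,\dots,b_{\ell-1}\}$. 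Beyond this, your final step (lift a basis of the image, adjoin a basis of the kernel, apply Ziegler's criterion) only yields the addition implication (ii)$\wedge$(iii)$\Rightarrow$(i); the deletion and especially the restriction implication (i)$\wedge$(ii)$\Rightarrow$(iii) require separate arguments in \cite{abeteraowakefield:euler}, where the surjectivity of $\rho$ and the control of its cokernel are established by a local-to-global analysis that your sketch correctly flags as the hard point but does not supply.
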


\begin{remark}
\label{rem:restriction}
We require a 
slightly stronger version of the restriction part of 
Theorem \ref{thm:add-del}, where we 
do not prescribe the exponents in \emph{loc.\ cit.} a priori.
Let  $(\CA, \nu), (\CA', \nu')$ and $(\CA'', \nu^*)$ 
be the triple with respect to a fixed hyperplane. 
It follows from \cite[Thm.\ 0.4]{abeteraowakefield:euler}
that if both $(\CA, \nu)$ and $(\CA', \nu')$ are free,
then their exponents are as given by parts (i) and (ii) in 
Theorem \ref{thm:add-del} 
(i.e., the exponents differing by 1 in one term is automatic,
cf.~\cite[Thm.\ 4.46]{orlikterao:arrangements}).
It then follows from the restriction part of \emph{loc.\ cit.}
that $(\CA'', \nu^*)$ is also free with exponents 
as in part (iii).
\end{remark}

Next we observe 
that localization is compatible 
with both deletion and restriction for multiarrangements. 

\begin{lemma}
\label{lem:euler}
Let $(\CA, \nu)$ be a multiarrangement, $X \in L(\CA)$, and $H \in \CA_X$.
Let $(\CA, \nu)$, $(\CA', \nu')$ and $(\CA'', \nu^*)$ be the triple 
with respect to $H$. Then we have 
\begin{itemize}
\item[(i)]
$((\CA_X)', (\nu_X)') = ((\CA')_X, (\nu')_X)$; and 
\item[(ii)]
$((\CA_X)'', (\nu_X)^*) = ((\CA_X)^H, (\nu_X)^*) = ((\CA^H)_X, (\nu^*)_X)
= ((\CA'')_X, (\nu^*)_X)$.
\end{itemize}
\end{lemma}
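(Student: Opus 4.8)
The plan is to verify each claimed equality by unwinding the definitions of localization, deletion, and restriction, treating the two parts separately since part (i) concerns deletion while part (ii) concerns restriction. Throughout I would keep in mind that $X \in L(\CA)$ and $H \in \CA_X$, so $H$ is genuinely a hyperplane of the localized arrangement $\CA_X$, which means the triple with respect to $H$ is defined for $\CA_X$ as well as for $\CA$.

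For part (i), I would split into the two cases from Definition \ref{def:Euler}. If $\nu(H) = 1$, then deletion removes $H$; forming $(\CA')_X = (\CA \setminus \{H\})_X$ and comparing with $(\CA_X)' = \CA_X \setminus \{H\}$, the underlying simple arrangements agree because localizing and then deleting $H$ gives the same hyperplanes as deleting $H$ and then localizing (here one uses that $H \in \CA_X$ so $H$ is removed in both orders). The multiplicities match since both just restrict $\nu$ to the surviving hyperplanes. If $\nu(H) > 1$, the underlying simple arrangement is unchanged by deletion, so $(\CA_X)' = \CA_X = (\CA')_X$ on the nose, and both $(\nu_X)'$ and $(\nu')_X$ lower the multiplicity of $H$ by one and leave every other value equal to $\nu$. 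This part I expect to be routine bookkeeping.

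For part (ii), the chain of equalities is more delicate because it mixes restriction with localization, and the Euler multiplicity is defined via a rank-two localization rather than by a naive formula. The first equality $(\CA_X)'' = (\CA_X)^H$ and the last equality $(\CA'')_X = (\CA^H)_X$ are matters of matching simple arrangements: restricting to $H$ intersects each remaining hyperplane with $H$, and Lemma \ref{lem:swap}(ii) (with $Y = H$, noting $H \le X$ fails in general but $H \in \CA_X$ means we can apply the swap with the roles chosen so $X$ and $H$ are comparable in the required sense) lets me commute localization at $X$ past restriction at $H$. The genuinely substantive claim is that the Euler multiplicities agree, i.e. $(\nu_X)^* = (\nu^*)_X$ as functions on the common underlying restricted arrangement.

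The hard part will be exactly this comparison of Euler multiplicities. For a hyperplane $Y$ in the restriction, $\nu^*(Y)$ is defined through the rank-two localization $(\CA_Y, \nu_Y)$ via the distinguished basis of Definition \ref{def:Euler}, and the emphasized point there is that $\nu^*(Y)$ depends only on the $S$-module $D(\CA_Y, \nu_Y)$. My plan is to show that the relevant rank-two localization computing $(\nu_X)^*(Y)$ inside $\CA_X$ coincides with the one computing $\nu^*(Y)$ inside $\CA$. Concretely, for $Y \in (\CA_X)''$ the localization $(\CA_X)_Y$ equals $\CA_Y$ by Lemma \ref{lem:swap}(i) (since $Y \le X$ forces $(\CA_X)_Y = \CA_X \cap \CA_Y = (\CA_Y)_X = \CA_Y$ once one checks $\CA_Y \subseteq \CA_X$, which holds because $Y \subseteq X$), and the multiplicities $\nu_Y$ likewise agree. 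Since the Euler multiplicity at $Y$ is read off from this identical rank-two module in both settings, the values coincide, giving $(\nu_X)^*(Y) = \nu^*(Y) = (\nu^*)_X(Y)$. Assembling the simple-arrangement identifications with this multiplicity equality yields all four expressions in part (ii).
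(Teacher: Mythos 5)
Your proposal is correct and matches the paper's proof essentially step for step: part (i) is the same routine unwinding of Definitions \ref{def:localization} and \ref{def:Euler}, and for part (ii) you commute localization past restriction via Lemma \ref{lem:swap}(ii) and then reduce the equality of Euler multiplicities to the identity $(\CA_X)_Y = \CA_Y$ (hence $D((\CA_X)_Y,(\nu_X)_Y) = D(\CA_Y,\nu_Y)$) for $Y$ in the restriction, which is exactly the paper's argument. The one blemish is your justification ``$\CA_Y \subseteq \CA_X$ because $Y \subseteq X$'': the subspace containment actually goes the other way, $X \subseteq Y$ (since $Y = H \cap H'$ with $H, H' \in \CA_X$), and it is this containment that yields $\CA_Y \subseteq \CA_X$, so the conclusion stands.
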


\begin{proof}
(i).
The proof follows easily 
from Definitions \ref{def:localization} and \ref{def:Euler}.

(ii).
Thanks to Lemma \ref{lem:swap}(ii), we have $(\CA_X)^H = (\CA^H)_X$.
By definition,  $(\nu^*)_X = \nu^*$ on $(\CA^H)_X$.
So it suffices to show that $(\nu_X)^*  = \nu^*$
on $(\CA_X)^H$. 
Let $Y \in (\CA_X)^H$. Then $Y = H \cap H'$ for some $H' \in \CA_X \setminus\{H\}$.
Consequently, $(\CA_X)_Y = \CA_Y$ and $(\nu_X)_Y = \nu_Y$.
Therefore, $D((\CA_X)_Y, (\nu_X)_Y) = D(\CA_Y, \nu_Y)$ and 
so by definition of the Euler multiplicity $(\nu_X)^* = \nu^*$,
as desired. 
\end{proof}

We recast Lemma \ref{lem:euler} in terms of triples as follows.

\begin{corollary}
\label{cor:euler}
Let $(\CA, \nu)$ be a multiarrangement, $X \in L(\CA)$, and $H \in \CA_X$.
Let $(\CA, \nu)$, $(\CA', \nu')$ and $(\CA'', \nu^*)$ be the triple 
of $(\CA, \nu)$ with respect to $H$. Then 
$(\CA_X, \nu_X)$, $((\CA')_X, (\nu')_X)$ and 
$((\CA'')_X, (\nu^*)_X)$ is the triple of 
 $(\CA_X, \nu_X)$ with respect to $H$.
\end{corollary}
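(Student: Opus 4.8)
The plan is to obtain this corollary as a direct bookkeeping consequence of Lemma \ref{lem:euler}, in which all the substantive content already resides. First I would check that the triple of $(\CA_X, \nu_X)$ with respect to $H$ is even defined: since $H \in \CA_X$, the localization $\CA_X$ is nonempty, so $(\CA_X, \nu_X) \ne \Phi_\ell$, and $H$ is a bona fide hyperplane of the localized multiarrangement. Hence Definition \ref{def:Euler} applies to $(\CA_X, \nu_X)$ with distinguished hyperplane $H$, producing the triple
\[
\big((\CA_X, \nu_X),\ ((\CA_X)', (\nu_X)'),\ ((\CA_X)'', (\nu_X)^*)\big).
\]
The task is then to identify its three entries with $(\CA_X, \nu_X)$, with $((\CA')_X, (\nu')_X)$, and with $((\CA'')_X, (\nu^*)_X)$, respectively.

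The first entry agrees verbatim. For the deletion I would simply invoke Lemma \ref{lem:euler}(i), which states $((\CA_X)', (\nu_X)') = ((\CA')_X, (\nu')_X)$; note that this already encodes the compatibility of the case distinction $\nu(H_0) = 1$ versus $\nu(H_0) > 1$ with localization, since $\nu_X(H) = \nu(H)$. For the restriction I would invoke Lemma \ref{lem:euler}(ii) and read off the two outer terms of its chain of equalities, yielding $((\CA_X)'', (\nu_X)^*) = ((\CA'')_X, (\nu^*)_X)$. Matching these three identifications completes the argument.

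The one point genuinely requiring care — and the place where the real work lives — is the agreement of the two Euler multiplicities, $(\nu_X)^* = (\nu^*)_X$ on $(\CA_X)'' = (\CA'')_X$. This is not purely formal, because the Euler multiplicity is computed hyperplane-by-hyperplane through the rank-two localizations $D(\CA_Y, \nu_Y)$, so one must know these rank-two data are unaffected by first passing to $\CA_X$. That is exactly what is handled inside Lemma \ref{lem:euler}(ii), using the fact recorded in Definition \ref{def:Euler} that $\nu^*(Y)$ depends only on the module $D(\CA_Y, \nu_Y)$, together with the identity $(\CA_X)_Y = \CA_Y$ for $Y \in (\CA_X)^H$. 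Since that invariance has already been established in the lemma, here it suffices to cite it, and I would expect no further obstacle: the corollary is a faithful repackaging of Lemma \ref{lem:euler} in the language of triples.
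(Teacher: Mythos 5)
Your proposal is correct and matches the paper's intent exactly: the paper offers no separate proof of Corollary \ref{cor:euler}, presenting it merely as a recasting of Lemma \ref{lem:euler} in the language of triples, which is precisely the bookkeeping you carry out via parts (i) and (ii) of that lemma. Your added remarks on well-definedness of the localized triple and on where the genuine content (the equality of Euler multiplicities) resides are accurate and consistent with the paper.
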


In general, for $\CA$ a free hyperplane arrangement, $(\CA, \nu)$ need not be
free for an arbitrary multiplicity $\nu$, 
e.g.\ see \cite[Ex.\ 14]{ziegler:multiarrangements}.
However, for the following 
special class of multiarrangements this is always the case,
\cite[Prop.\ 5.2]{abeteraowakefield:euler}.

\begin{defn}
\label{def:single}
Let $\CA$ be a simple arrangement.
Fix $H_0 \in \CA$ and $m_0 \in \BBZ_{>1}$ and 
define the 
\emph{multiplicity $\delta$ concentrated at $H_0$}
by
\[
\delta(H) := \delta_{H_0,m_0}(H) := 
\begin{cases}
m_0 & \text{ if } H = H_0,\\
1   & \text{ else}.
\end{cases}
\]
\end{defn}

The following combines
\cite[Prop.\ 5.2]{abeteraowakefield:euler}, parts of its proof
and Theorem \ref{thm:zieglermulti}. Recall the definition of 
Ziegler's multiplicity $\kappa$ from Definition \ref{def:zieglermulti}.
The proof of Proposition \ref{prop:single}(i)
given in \cite{abeteraowakefield:euler} 
depends on Theorem \ref{thm:add-del}.
We present an elementary explicit 
construction for a homogeneous 
$S$-basis of $D(\CA, \delta)$.

\begin{proposition}
\label{prop:single}
Let $\CA$ be a free simple arrangement with
exponents $\exp \CA = \{1, e_2, \ldots, e_\ell\}$.
Fix $H_0 \in \CA$, $m_0 \in \BBZ_{>1}$ and let 
$\delta = \delta_{H_0,m_0}$ be 
as in Definition \ref{def:single}.
Let  $(\CA'', \delta^*)$ be the restriction of 
$(\CA, \delta)$ with respect to $H_0$. 
Then we have
\begin{itemize}
\item[(i)] 
$(\CA, \delta)$ is free with exponents
$\exp \CA = \{m_0, e_2, \ldots, e_\ell\}$;
\item[(ii)] 
$(\CA'', \delta^*) = (\CA'', \kappa)$ is free with exponents
$\exp  (\CA'', \kappa) = \{e_2, \ldots, e_\ell\}$.
\end{itemize}
\end{proposition}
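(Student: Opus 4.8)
The plan is to construct an explicit homogeneous $S$-basis of $D(\CA,\delta)$ by exploiting the hypothesis that $\CA$ is free with an exponent equal to $1$. Since $\exp\CA = \{1, e_2, \ldots, e_\ell\}$, the module $D(\CA)$ has a homogeneous basis $\{\theta_E, \theta_2, \ldots, \theta_\ell\}$ with $\pdeg\theta_E = 1$ and $\pdeg\theta_i = e_i$. First I would identify the degree-one derivation $\theta_E$: because $\CA$ is central, the Euler derivation $\theta_E = \sum_i x_i D_i$ lies in $D(\CA)$ and has polynomial degree $1$, so it may be taken as the first basis element. The key observation is that the other basis elements $\theta_2, \ldots, \theta_\ell$ already satisfy $\theta_i(\alpha_{H_0}) \in \alpha_{H_0}S$, and the only constraint strengthened in passing from $\delta$ to the simple multiplicity is at $H_0$, where we now require $\theta(\alpha_{H_0}) \in \alpha_{H_0}^{m_0}S$.

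The main step is to produce, from the given basis of $D(\CA)$, a modified family that satisfies the stronger divisibility at $H_0$ while retaining linear independence and correct total degree. The natural candidates are $\{\alpha_{H_0}^{m_0-1}\theta_E, \theta_2, \ldots, \theta_\ell\}$: multiplying $\theta_E$ by $\alpha_{H_0}^{m_0-1}$ raises its polynomial degree from $1$ to $m_0$ and forces $\alpha_{H_0}^{m_0-1}\theta_E(\alpha_{H_0}) \in \alpha_{H_0}^{m_0}S$, so this derivation lies in $D(\CA,\delta)$. One checks that $\theta_2, \ldots, \theta_\ell$ remain in $D(\CA,\delta)$: for $H \ne H_0$ the condition is unchanged from $D(\CA)$, and at $H_0$ they already satisfy the simple-arrangement condition $\theta_i(\alpha_{H_0}) \in \alpha_{H_0}S$, which must be upgraded. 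This is the point where care is needed: a generic $\theta_i$ need not satisfy the $\alpha_{H_0}^{m_0}$-divisibility, so I expect the main obstacle to be arranging the basis so that $\theta_2, \ldots, \theta_\ell$ can be taken tangent to $H_0$ to the required order, or else replacing them by suitable combinations.

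To resolve this, I would invoke Ziegler's restriction construction. Consider the restriction map that sends a derivation tangent to $H_0$ to its restriction along $H_0$; by Theorem \ref{thm:zieglermulti} the restricted multiarrangement $(\CA'',\kappa)$ is free with exponents $\{e_2, \ldots, e_\ell\}$, and the kernel of the restriction consists of derivations divisible by $\alpha_{H_0}$. Lifting a homogeneous basis of $D(\CA'',\kappa)$ through this map yields derivations $\eta_2, \ldots, \eta_\ell$ of degrees $e_2, \ldots, e_\ell$ that, together with $\alpha_{H_0}^{m_0-1}\theta_E$, automatically satisfy the $\delta$-condition at $H_0$; the divisibility at the remaining hyperplanes is inherited from $D(\CA)$. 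Finally I would verify freeness via Ziegler's analogue of Saito's criterion \cite[Thm.\ 8]{ziegler:multiarrangements}: the candidate family $\{\alpha_{H_0}^{m_0-1}\theta_E, \eta_2, \ldots, \eta_\ell\}$ has polynomial degrees summing to $m_0 + e_2 + \cdots + e_\ell = |\delta| = \deg Q(\CA,\delta)$, so it suffices to check that its coefficient determinant is a nonzero scalar multiple of $Q(\CA,\delta)$. This reduces to a determinant computation comparing against the Saito determinant for the original free arrangement $\CA$, from which part (i) follows, and part (ii) is then immediate since $\delta^* = \kappa$ by the definition of the Euler multiplicity applied to the concentrated multiplicity, combined with Theorem \ref{thm:zieglermulti}.
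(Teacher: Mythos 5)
Your candidate basis $\{\alpha_0^{m_0-1}\theta_E,\theta_2,\dots,\theta_\ell\}$ is exactly the one the paper constructs, and you correctly flag the delicate point: the degree-$e_i$ derivations must satisfy $\theta_i(\alpha_0)\in\alpha_0^{m_0}S$, not merely $\theta_i(\alpha_0)\in\alpha_0 S$. But your resolution of that point has a genuine gap. You claim that lifting a homogeneous basis of $D(\CA'',\kappa)$ through the restriction map yields derivations that \emph{automatically} satisfy the $\delta$-condition at $H_0$. This is false as stated: a preimage $\eta$ of an element of $D(\CA'',\kappa)$ is only determined modulo the kernel $\alpha_0\Der(S)\cap D(\CA)$ of the restriction map, so all one can say is $\eta(\alpha_0)\in\alpha_0 S$. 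Concretely, if $\eta_0$ is a lift with $\eta_0(\alpha_0)=0$, then $\eta_0+\alpha_0\theta_E$ is another lift of the same element, yet $(\eta_0+\alpha_0\theta_E)(\alpha_0)=\alpha_0^2\notin\alpha_0^{m_0}S$ once $m_0\ge 3$. The divisibility at $H_0$ therefore depends on the choice of lift and must be arranged, not inherited.

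The missing ingredient --- and what the paper's proof supplies --- is the direct sum decomposition $D(\CA)=S\theta_E\oplus\Ann(H_0)$ of \cite[Prop.\ 4.27]{orlikterao:arrangements}, where $\Ann(H_0)=\{\theta\in D(\CA)\mid\theta(\alpha_0)=0\}$. Taking $\theta_2,\dots,\theta_\ell$ to be a homogeneous basis of $\Ann(H_0)$ (necessarily of degrees $e_2,\dots,e_\ell$), one has $\theta_i(\alpha_0)=0\in\alpha_0^{m_0}S$ for every $m_0$, so these lie in $D(\CA,\delta)$ with no further work; and the decomposition shows the family generates $D(\CA,\delta)$, since any $\theta=f\theta_E+\sum g_i\theta_i$ in $D(\CA,\delta)$ has $\theta(\alpha_0)=f\alpha_0\in\alpha_0^{m_0}S$, forcing $\alpha_0^{m_0-1}\mid f$. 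This renders your closing Saito-determinant computation unnecessary and keeps part (i) independent of Theorem \ref{thm:zieglermulti}. Your treatment of (ii) matches the paper's, except that the identity $\delta^*=\kappa$ is not immediate from the definition of the Euler multiplicity; it requires the rank-two localization analysis carried out in the proof of \cite[Prop.\ 5.2]{abeteraowakefield:euler}, which the paper cites for exactly this step.
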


\begin{proof}
(i).
We utilize the construction from 
the proof of \cite[Prop.\ 4.27]{orlikterao:arrangements}.
Let $\alpha_0 \in V^*$ with $H_0 = \ker \alpha_0$
and let $\Ann(H_0) = \{\theta \in D(\CA) \mid \theta(\alpha_0) = 0\}$
be the annihilator of $H_0$ in $D(\CA)$.
Let $\theta_E$ be the Euler derivation in $\Der(S)$ 
\cite[Def.\ 4.7]{orlikterao:arrangements}.
Then
\[
D(\CA) = S \theta_E \oplus \Ann(H_0)
\]
is a direct sum of $S$-modules.
Let $\{\theta_2, \ldots, \theta_\ell\}$ be a homogeneous
$S$-basis of $\Ann(H_0)$.
Then $\{\theta_E, \theta_2, \ldots, \theta_\ell\}$ is a homogeneous
$S$-basis of $D(\CA)$.
It follows that 
$\{\alpha_0^{m_0-1} \theta_E, \theta_2, \ldots, \theta_\ell\}$ is a homogeneous
$S$-basis of $D(\CA, \delta)$.

(ii). The equality $(\CA'', \delta^*) = (\CA'', \kappa)$
is derived as in the proof of 
\cite[Prop.\ 5.2]{abeteraowakefield:euler}.
The remaining statements then follow from 
Theorem \ref{thm:zieglermulti}. 
\end{proof}

\subsection{Inductive and Recursive Freeness for Multiarrangements}
\label{subsec:indutive}
As in the simple case, Theorem \ref{thm:add-del} motivates 
the notion of inductive freeness. 

\begin{defn}[{\cite[Def.\ 0.9]{abeteraowakefield:euler}}]
\label{def:indfree}
The class $\CIFM$ of \emph{inductively free} multiarrangements 
is the smallest class of arrangements subject to
\begin{itemize}
\item[(i)] $\Phi_\ell \in \CIFM$ for each $\ell \ge 0$;
\item[(ii)] for a multiarrangement $(\CA, \nu)$, if there exists a hyperplane $H_0 \in \CA$ such that both
$(\CA', \nu')$ and $(\CA'', \nu^*)$ belong to $\CIFM$, and $\exp (\CA'', \nu^*) \subseteq \exp (\CA', \nu')$, 
then $(\CA, \nu)$ also belongs to $\CIFM$.
\end{itemize}
\end{defn}

\begin{remark}[{\cite[Rem.\ 0.10]{abeteraowakefield:euler}}]
\label{rem:indfree}
The intersection of $\CIFM$ with the class of 
simple arrangements is $\CIF$.
\end{remark}

As for simple arrangements, if $r(\CA) \le 2$,
then $(\CA, \nu)$  is inductively free,  
\cite[Cor.\ 7]{ziegler:multiarrangements}.

\begin{remark}
\label{rem:indchain}
Suppose that $(\CA, \nu) \in \CIFM$. 
Then by Definition \ref{def:indfree} there exists a chain of 
inductively free submultiarrangements, starting with the 
empty arrangement 
\[
\Phi_\ell \subseteq (\CA_1, \nu_1) \subseteq (\CA_2, \nu_2) \subseteq \ldots \subseteq (\CA_n, \nu_n) = (\CA, \nu) 
\]
such that each consecutive pair obeys Definition \ref{def:indfree}(ii). 
In particular, $|\nu_i| = i$ for each $1 \le i \le n$ and so $|\nu| = n$.
Letting $H_i$ be the hyperplane in the $i$th inductive step, we have
 $(\CA_i'', \nu_i^*) = (\CA_i^{H_i}, \nu_i^*)$. 
In particular, $(\CA, \nu) = \{H_1, \ldots, H_n\}$ as a multiset.
So a fixed hyperplane may occur as one of the $H_i$ for different 
indices $i$.
We frequently refer to a sequence as above as 
an \emph{inductive chain} of $(\CA, \nu)$. 
\end{remark}

As in the simple case, 
Theorem \ref{thm:add-del} also motivates the notion of recursive 
freeness for multiarrangements, cf.~\cite[Def.\ 4.60]{orlikterao:arrangements}.

\begin{defn}
\label{def:recfree}
The class $\CRFM$ of \emph{recursively free} multiarrangements 
is the smallest class of arrangements subject to
\begin{itemize}
\item[(i)] $\Phi_\ell \in \CRFM$ for each $\ell \ge 0$;
\item[(ii)] for a multiarrangement $(\CA, \nu)$, if there exists a hyperplane $H_0 \in \CA$ such that both
$(\CA', \nu')$ and $(\CA'', \nu^*)$ belong to $\CRFM$, and $\exp (\CA'', \nu^*) \subseteq \exp (\CA', \nu')$, 
then $(\CA, \nu)$ also belongs to $\CRFM$;
\item[(iii)] 
for a multiarrangement $(\CA, \nu)$, 
if there exists a hyperplane $H_0 \in \CA$ such that both
$(\CA, \nu)$ and $(\CA'', \nu^*)$ belong to $\CRFM$, and 
$\exp (\CA'', \nu^*) \subseteq \exp (\CA, \nu)$, 
then $(\CA', \nu')$ also belongs to $\CRFM$.
\end{itemize}
\end{defn}

By Definitions \ref{def:indfree} and \ref{def:recfree}, 
$\CIFM \subseteq \CRFM$.

\begin{remark}
\label{rem:recchain}
Suppose that $(\CA, \nu) \in \CRFM$. 
It follows from Definition \ref{def:recfree} that there exists a chain of 
recursively free submultiarrangements, starting with the 
empty arrangement 
\[
\Phi_\ell \subseteq (\CA_1, \nu_1) \subseteq (\CA_2, \nu_2) \ldots (\CA_n, \nu_n) = (\CA, \nu) 
\]
such that each consecutive pair obeys Definition \ref{def:recfree}. 
In particular, $|\nu_i| = |\nu_{i-1}| \pm 1$ for each $1 \le i \le n$ and 
$|\nu| = n$.
We also refer to a sequence as above as 
a \emph{recursive chain} of $(\CA, \nu)$. 
\end{remark}

\subsection{Hereditary Inductive Freeness for Multiarrangements}
\label{subsec:heredindutive}

It is tempting to define the notion of a 
hereditarily inductively free multiarrangement simply 
by iterating the construction of the Euler multiplicity
from Definition \ref{def:Euler}.
However, the following two examples demonstrate that 
the resulting multiplicity on the restriction 
depends on the order in which the 
iteration is taking place.
The first is an instance of a constant 
multiplicity while the second is an example 
of a multiplicity concentrated at a single
hyperplane, cf.~Definition \ref{def:single}.
Thus, such a notion is only 
well-defined with respect to a
fixed total order on $\CA$.
We introduce such a notion 
in Definition \ref{def:extEuler}
below without further pursuing it seriously, 
because of its lack of uniqueness.

\begin{example}
\label{ex:heredindfree1}
Define the rank $3$ multiarrangement
$(\CA,\nu)$ by 
\[
Q((\CA,\nu)) = x^2y^2(x+y)^2(x+z)^2(y+z)^2.
\]
Let $H_1 := \ker x$, $H_2 := \ker (y+z)$ and $Y := H_1 \cap H_2$.
Then $(\CA^{H_1},\nu^{*_1}) = (\CA^{H_1},(3,2,2))$
 and $(\CA^{H_2},\nu^{*_2}) = (\CA^{H_2},(2,2,2,2))$, by
\cite[Prop.\ 4.1(6)]{abeteraowakefield:euler}. Moreover, we have
\[
  \left(\left( \CA^{H_1}\right)^{H_1\cap H_2},(\nu^{*_1})^{*_{12}}\right) =
   \left(\CA^Y,(3) \right) \not= \left( \CA^Y,(4)\right) =
   \left(\left( \CA^{H_2}\right)^{H_1\cap H_2},(\nu^{*_2})^{*_{12}}\right),
\]
according to \cite[Prop.\ 4.1(7), (6)]{abeteraowakefield:euler}.
\end{example}

\begin{example} 
\label{ex:heredindfree2}
Let $\CA = \CA(G(3,3,3))$ be the 
reflection arrangement of the 
unitary reflection group $G(3,3,3)$
with defining polynomial 
\[
Q(\CA(G(3,3,3))) = (x^3- y^3)(x^3- z^3)(y^3- z^3).
\]
Fix $H_1 := \ker (x-y) \in \CA$, $m_1 \in \BBZ_{>1}$ and let 
$\delta = \delta_{H_1,m_1}$ be 
as in Definition \ref{def:single}.
Let $H_2 := \ker (x-z)$ and 
$Y := H_1 \cap H_2$. Then we have
$(\CA^{H_1},\delta^{*_1}) = (\CA^{H_1},(2,2,2,2))$,
owing to 
\cite[Prop.\ 4.1(2)]{abeteraowakefield:euler}, and 
$((\CA^{H_1})^{H_1 \cap H_2},(\delta^{*_1})^{*_{12}}) = (\CA^Y,(4))$, 
by \cite[Prop.\ 4.1(6)]{abeteraowakefield:euler}. On the other hand
$(\CA^{H_2},\delta^{*_2}) = (\CA^{H_2},(m_1,1,1,1))$, thanks to 
\cite[Prop.\ 4.1(3)]{abeteraowakefield:euler} and 
$((\CA^{H_2})^{H_1\cap H_2},(\delta^{*_2})^{*_{12}}) = (\CA^Y,(3))$ 
for $m_1 \ge 3$, by \cite[Prop.\ 4.1(2)]{abeteraowakefield:euler}, 
and for $m_1 = 2$ by 
\cite[Prop.\ 4.1(4)]{abeteraowakefield:euler}. 
Therefore,
\[
\left(\left(\CA^{H_1}\right)^{H_1 \cap H_2},\left(\delta^{*_1}\right)^{*_{12}}\right) 
= \left(\CA^Y,(4)\right) \not= \left(\CA^Y,(3)\right) 
= \left(\left(\CA^{H_2}\right)^{H_1\cap H_2},\left(\delta^{*_2}\right)^{*_{12}}\right).
\]
\end{example}

In view of these examples, 
we extend the construction 
of a restriction of a multiarrangement to a hyperplane 
from Definition \ref{def:Euler} to 
restrictions of arbitrary members of $L(\CA)$ as follows.

\begin{defn}
\label{def:extEuler}
Fix a total order $\prec$ on $\CA$.
Let $Y\in L(\CA)$ be of rank $m$.
Then $\prec$ descends to give a total order on $\CA_Y$.
Then pick $H_1 \prec \ldots \prec H_m$ in $\CA_Y$ 
minimally with respect to $\prec$ such that 
$Y = H_1 \cap \ldots \cap H_m$.
One readily checks that 
\begin{equation}
\label{eq:iterate}
\CA^Y =
\left( \ldots \left((\CA^{H_1} )^{H_1\cap H_2}\right) \ldots \right)^{H_1\cap \ldots \cap H_m}. 
\end{equation}
Note that 
this is independent of the chosen order on $\{H_1, \ldots, H_m\}$.
Because of \eqref{eq:iterate},
if $\nu$ is a multiplicity on $\CA$, 
we can iterate the Euler multiplicity on 
consecutive restrictions in \eqref{eq:iterate} 
to obtain the \emph{restricted multiarrangement} on $\CA^Y$
with corresponding \emph{Euler multiplicity} 
which we denote again just by $\nu^*$ for simplicity
\[
(\CA^Y, \nu^*) := 
\left(\left( \ldots \left((\CA^{H_1} )^{H_1\cap H_2}\right) \ldots \right)^{H_1\cap \ldots \cap H_m} , \left( \ldots (\nu^*)^* \ldots \right)^*\right).
\]
\end{defn}

As demonstrated in 
Examples \ref{ex:heredindfree1} and \ref{ex:heredindfree2}, 
the construction of $(\CA^Y, \nu^*)$ in Definition \ref{def:extEuler}
depends on the chosen order of the iterated Euler multiplicities.
Nevertheless, using Lemma \ref{lem:euler}(ii) repeatedly, 
we get compatibility of 
restricted multiarrangements 
with taking localizations.

\begin{corollary}
\label{cor:extEuler}
Fix an order on $\CA$.
Let $(\CA, \nu)$ be a multiarrangement, $X \in L(\CA)$ and $Y \in L(\CA_X)$.
Then with the notation as in Definition \ref{def:extEuler}, 
we have 
\[
((\CA_X)^Y, (\nu_X)^*) = ((\CA^Y)_X, (\nu^*)_X).
\]
\end{corollary}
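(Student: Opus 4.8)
The plan is to induct on the rank $m$ of $Y$, peeling off the hyperplanes $H_1, \dots, H_m$ one at a time, with a single application of Lemma~\ref{lem:euler}(ii) serving as the engine for each inductive step.

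First I would record the two elementary facts that make the two sides directly comparable. Since $Y \in L(\CA_X)$, we have $X \subseteq Y$, and consequently $(\CA_X)_Y = \CA_Y$: any $H \in \CA$ with $Y \subseteq H$ automatically satisfies $X \subseteq Y \subseteq H$ and hence already lies in $\CA_X$. Because the total order on $\CA_X$ is merely the restriction of $\prec$, the $\prec$-minimal hyperplanes $H_1 \prec \dots \prec H_m$ in $\CA_Y$ with $Y = H_1 \cap \dots \cap H_m$ are the same whether selected inside $\CA$ or inside $\CA_X$. Thus, by Definition~\ref{def:extEuler}, both $(\CA^Y, \nu^*)$ and $((\CA_X)^Y, (\nu_X)^*)$ are built by iterating single-hyperplane Euler restrictions along the \emph{common} sequence $H_1, \dots, H_m$. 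Writing $Y_k := H_1 \cap \dots \cap H_k \in L(\CA)$ for the partial intersections (so $Y_0 = V$, $Y_m = Y$) and $(\CA^{Y_k}, \nu_k^*)$ for the multiarrangement obtained after the first $k$ of these restrictions (with $\nu_0^* = \nu$ and $\nu_m^* = \nu^*$), it suffices to prove by induction on $k$ that
\[
\bigl((\CA_X)^{Y_k}, (\nu_X)_k^*\bigr) = \bigl((\CA^{Y_k})_X, (\nu_k^*)_X\bigr),
\]
since the case $k = m$ is precisely the asserted identity.

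The base case $k = 0$ is the tautology $(\CA_X, \nu_X) = (\CA_X, \nu_X)$. For the inductive step I would note that $\bar H_k := Y_{k-1} \cap H_k$ is a genuine hyperplane of the restricted arrangement $\CA^{Y_{k-1}}$ in its ambient space $Y_{k-1}$ (proper because $Y_{k-1} \not\subseteq H_k$, as $H_k$ strictly lowers the rank), and that passing from stage $k-1$ to stage $k$ is exactly the single-hyperplane Euler restriction of $(\CA^{Y_{k-1}}, \nu_{k-1}^*)$ to $\bar H_k$, carried out simultaneously for $\CA$ and for $\CA_X$; here $(\CA^{Y_{k-1}})^{\bar H_k} = \CA^{Y_k}$ and $(\nu_{k-1}^*)^* = \nu_k^*$ by \eqref{eq:iterate} and Definition~\ref{def:extEuler}. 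Invoking the induction hypothesis to rewrite the stage-$(k-1)$ objects and then Lemma~\ref{lem:euler}(ii) applied to $(\CA^{Y_{k-1}}, \nu_{k-1}^*)$ with localizing subspace $X$ and restriction hyperplane $\bar H_k$ gives
\[
\bigl((\CA_X)^{Y_k}, (\nu_X)_k^*\bigr) = \bigl(\bigl((\CA^{Y_{k-1}})_X\bigr)^{\bar H_k}, \bigl((\nu_{k-1}^*)_X\bigr)^*\bigr) = \bigl(\bigl((\CA^{Y_{k-1}})^{\bar H_k}\bigr)_X, \bigl((\nu_{k-1}^*)^*\bigr)_X\bigr) = \bigl((\CA^{Y_k})_X, (\nu_k^*)_X\bigr),
\]
which closes the induction.

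The one hypothesis of Lemma~\ref{lem:euler}(ii) that must be re-verified at every stage, and which I expect to be the main (if modest) obstacle, is that the localizing subspace $X$ still belongs to the intersection lattice $L(\CA^{Y_{k-1}})$ of the intermediate restricted arrangement, together with $\bar H_k \in (\CA^{Y_{k-1}})_X$. The latter is immediate from $X \subseteq Y \subseteq Y_k = \bar H_k$. For the former I would use that $Y_{k-1} \in L(\CA)$ forces $\cap_{H \in \CA_{Y_{k-1}}} H = Y_{k-1}$ and that $X$ is the center of its own localization, $\cap_{H \in \CA_X} H = X$; splitting the latter intersection along $\CA_{Y_{k-1}} \subseteq \CA_X$ then yields $X = Y_{k-1} \cap \bigl(\cap_{H \in \CA_X \setminus \CA_{Y_{k-1}}} H\bigr) = \cap_{K \in (\CA^{Y_{k-1}})_X} K$, exhibiting $X$ as a member of $L(\CA^{Y_{k-1}})$. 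Everything else is the routine bookkeeping of nested restriction and localization operators, for which Lemma~\ref{lem:swap} is the underlying tool.
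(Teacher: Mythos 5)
Your argument is correct and is exactly the paper's intended proof: the paper justifies the corollary only with the remark that it follows ``using Lemma~\ref{lem:euler}(ii) repeatedly,'' and your induction on the number of hyperplanes peeled off, together with the verification that $X=\bigcap_{K\in(\CA^{Y_{k-1}})_X}K$ lies in $L(\CA^{Y_{k-1}})$ at each stage, is precisely that repetition carried out carefully. No discrepancy to report.
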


\begin{defn}
\label{def:heredindfree}
Fix an order on $\CA$.
The  multiarrangement $(\CA, \nu)$ is called 
\emph{hereditarily inductively free 
(with respect to the order on $\CA$)} provided 
$(\CA^Y, \nu^*) $ is inductively free for every  
$Y\in L(\CA)$. We abbreviate this class by 
$\CHIFM$.
\end{defn}

Clearly, $\CHIFM \subseteq \CIFM$.
With the aid of Corollary \ref{cor:extEuler}
and Theorem \ref{thm:localmulti} for $\CIFM$, 
one can extend the latter to the class $\CHIFM$.
Also, using Theorem \ref{thm:recfreep},  
one readily obtains the compatibility of 
$\CHIFM$ with 
the product construction for multiarrangements.
We leave the details to the interested reader.

\section{Proofs of Theorems \ref{thm:localmulti}
and \ref{thm:recfreep}}

\subsection{Inductive and Recursive Freeness of Localizations of Multiarrangements}
\label{ssec:localizations}

The following is a reformulation of Theorem \ref{thm:localmulti}.

\begin{theorem}
\label{thm:multitwo}
Let $U \subseteq V$ be a subspace and let 
$(\CA, \nu)$ be a multiarrangement in $V$.
\begin{itemize}
\item[(i)] If $(\CA, \nu)$ is  inductively free,
then so is the localization $(\CA_U, \nu_U)$.
\item[(ii)] If $(\CA, \nu)$ is recursively free,
then so is the localization $(\CA_U, \nu_U)$.
\end{itemize}
\end{theorem}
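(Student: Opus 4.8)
The plan is to prove Theorem~\ref{thm:multitwo} by induction on $|\nu|$, using the fact that localization commutes with the whole triple construction (Corollary~\ref{cor:euler}) to transfer an inductive (resp.\ recursive) chain of $(\CA,\nu)$ down to one of $(\CA_U,\nu_U)$. First I would reduce to the case $U = X$ for some $X \in L(\CA)$: by Definition~\ref{def:localization}, setting $X = \cap_{H \in \CA_U} H$ gives $\CA_U = \CA_X$ and $\nu_U = \nu_X$ with $X \in L(\CA)$, so it suffices to prove both statements for localizations at lattice elements. I would treat~(i) first and then adapt the argument to~(ii).

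For part~(i), I would argue by induction on $|\nu|$. The base case $|\nu| = 0$ is immediate since the localization of $\Phi_\ell$ is again empty, hence inductively free. For the inductive step, suppose $(\CA,\nu) \in \CIFM$ with $|\nu| > 0$; by Remark~\ref{rem:indchain} there is a hyperplane $H_0 \in \CA$ realizing the final step of an inductive chain, so that $(\CA',\nu')$ and $(\CA'',\nu^*)$ are inductively free with $\exp(\CA'',\nu^*) \subseteq \exp(\CA',\nu')$. The key case split is whether $H_0 \in \CA_X$ or not. If $H_0 \notin \CA_X$, then deleting or restricting at $H_0$ does not disturb the hyperplanes lying above $X$, so the localization is unchanged, i.e.\ $(\CA_X, \nu_X) = ((\CA')_X, (\nu')_X)$, and I would conclude by applying the induction hypothesis to $(\CA',\nu')$, whose order is strictly smaller. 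If $H_0 \in \CA_X$, then Corollary~\ref{cor:euler} tells me that $(\CA_X, \nu_X)$, $((\CA')_X, (\nu')_X)$, $((\CA'')_X, (\nu^*)_X)$ is precisely the triple of $(\CA_X, \nu_X)$ with respect to $H_0$. By induction, $((\CA')_X, (\nu')_X)$ and $((\CA'')_X, (\nu^*)_X)$ are inductively free, so to invoke Definition~\ref{def:indfree}(ii) for $(\CA_X,\nu_X)$ it remains to check the exponent containment $\exp((\CA'')_X,(\nu^*)_X) \subseteq \exp((\CA')_X,(\nu')_X)$.

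The main obstacle is exactly this exponent comparison: the inclusion $\exp(\CA'',\nu^*) \subseteq \exp(\CA',\nu')$ of the big arrangement need not pass verbatim to the localizations, since localizing can drop exponents. To handle it I would use Remark~\ref{rem:restriction}: the two free multiarrangements $((\CA')_X,(\nu')_X)$ and $((\CA'')_X,(\nu^*)_X)$ are, by Corollary~\ref{cor:euler}, the deletion and restriction of the \emph{free} multiarrangement $(\CA_X,\nu_X)$, so their exponents automatically satisfy the addition-deletion exponent pattern of Theorem~\ref{thm:add-del}(ii),(iii)---that is, the restriction exponents are obtained from the deletion exponents by removing a single entry. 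This forces $\exp((\CA'')_X,(\nu^*)_X) \subseteq \exp((\CA')_X,(\nu')_X)$ and completes the inductive step, giving $(\CA_X,\nu_X) \in \CIFM$. One subtlety I would watch carefully is that to apply Remark~\ref{rem:restriction} I need $(\CA_X,\nu_X)$ itself to be free, which is guaranteed by Theorem~\ref{thm:multi}, and that the freeness of $((\CA')_X,(\nu')_X)$ follows from it being in $\CIFM$ by induction.

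For part~(ii) I would run the same induction, now on the length of a recursive chain (Remark~\ref{rem:recchain}), allowing the extra deletion step in Definition~\ref{def:recfree}(iii). The case analysis on whether $H_0 \in \CA_X$ is identical, and Corollary~\ref{cor:euler} again identifies the localized triple; the only new point is that a recursive chain may \emph{add} a hyperplane via clause~(iii), but since that clause is also phrased entirely in terms of a triple, localizing it produces a valid instance of clause~(iii) for $(\CA_X,\nu_X)$, with the same exponent-pattern argument via Remark~\ref{rem:restriction} supplying the required containment. I therefore expect part~(ii) to follow from part~(i) with only bookkeeping changes, and the genuinely delicate step throughout to remain the automatic-exponents argument that rescues the containment condition after localization.
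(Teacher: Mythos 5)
Your proposal is correct in substance and relies on exactly the same three technical pillars as the paper: the compatibility of localization with the triple (Corollary~\ref{cor:euler}), freeness of localizations (Theorem~\ref{thm:multi}), and the ``automatic exponents'' observation (Remark~\ref{rem:restriction}) to recover the containment $\exp(\CA''_X,\nu^*_X)\subseteq\exp(\CA'_X,\nu'_X)$, which is indeed the delicate point. The organization of the induction, however, is genuinely different. The paper runs an outer induction on the rank $r(\CA)$ (base case $r\le 3$, settled by Ziegler's rank-$2$ freeness), and for fixed rank localizes an \emph{entire} inductive or recursive chain term by term, verifying each step; the rank induction is what handles the restrictions $(\CA_i'',\nu_i^*)$, since these have strictly smaller rank. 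You instead induct on $|\nu|$ and only inspect the final step of a chain, splitting on whether $H_0\in\CA_X$; the order induction covers both the deletion ($|\nu'|=|\nu|-1$) and the restriction ($|\nu^*|=|\nu|-b_\ell\le|\nu|-1$, since $b_\ell-1\ge 0$ is an exponent of $(\CA',\nu')$). Your version avoids the nested induction and is arguably leaner; what the paper's chain-walking buys in exchange is the explicit statement (Remark~\ref{rem:proof}) that a given inductive or recursive chain of $(\CA,\nu)$ descends to one of $(\CA_X,\nu_X)$, which your argument only yields implicitly.

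One point needs tightening in part~(ii). ``Induction on the length of a recursive chain'' is not well-founded as stated: in a step of type Definition~\ref{def:recfree}(iii) the arrangement $(\CA,\nu)$ from which one deletes has \emph{larger} order than the target $(\CA',\nu')$, and the restriction $(\CA'',\nu^*)$ occurring in any step carries its own, unrelated, derivation whose length you do not control. The clean repair is to replace the numerical induction by structural induction on membership in $\CRFM$: show that the class of recursively free multiarrangements all of whose localizations are recursively free contains $\Phi_\ell$ and is closed under clauses (ii) and (iii) of Definition~\ref{def:recfree}, and conclude by minimality of $\CRFM$. With that adjustment (and noting that in clause~(iii) the required freeness of $(\CA',\nu')$ follows from the deletion part of Theorem~\ref{thm:add-del} before one applies Theorem~\ref{thm:multi} and Remark~\ref{rem:restriction} to the localized pair), your argument goes through.
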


\begin{proof}
We readily reduce to the case where 
we localize with respect to a space
$X$ belonging to the 
intersection lattice of $\CA$. 
For, letting $X = \cap_{H \in \CA_U} H \in L(\CA)$,
 we have $\CA_X = \CA_U$.

(i).
We argue by induction on the rank $r(\CA)$.
If $r(\CA) \le 3$, then 
$r(\CA_X) \le 2$ for $X \ne T_\CA$, so
the result follows thanks to 
\cite[Cor.\ 7]{ziegler:multiarrangements}.

So suppose $(\CA, \nu)$ is inductively free of rank $r > 3$
and that the statement holds for 
all inductively free multiarrangements of rank less than $r$.

Since $(\CA, \nu)$ is inductively free, there is an
inductive chain  $(\CA_i, \nu_i)$  of $(\CA, \nu)$,  
where $|\nu_i| =i$, for $i = 1, \ldots, n = |\nu|$, 
see Remark \ref{rem:indchain}.
Then thanks to Lemma \ref{lem:swap}(i), we have
\begin{equation}
\label{eq:seq1}
\CA_X \cap \CA_i = (\CA_i)_X.
\end{equation}
For $H \in \CA_X \cap \CA_i$, we have $H \le X$, and so by \eqref{eq:seq1} and Lemma \ref{lem:swap}(ii),  
\begin{equation}
\label{eq:seq2}
\left(\CA_X \cap \CA_i\right)^{H} = \left((\CA_i)_X\right)^{H} 
= \left(\CA_i^{H}\right)_X.
\end{equation}

Consequently, localizing each member of 
the sequence $(\CA_i, \nu_i)$ at $X$, removing redundant 
terms if necessary and reindexing the resulting distinct multiarrangements, 
we obtain the following sequence of submultiarrangements of 
$(\CA_X, \nu_X)$, 
\begin{equation}
\label{eq:seq3}
(\CA_{1,X}, \nu_{1,X}) \subsetneq (\CA_{2,X}, \nu_{2,X})
\subsetneq  \ldots \subsetneq (\CA_{m,X}, \nu_{m,X}) = (\CA_X, \nu_X),
\end{equation}
where $\CA_{i,X}$ is short for $(\CA_i)_X$ and 
$\nu_{i,X}$ for $\nu_i|_{(\CA_i)_X}$.
In particular, $|\nu_{i,X}| = i$ and 
$m = |\nu_X|$.
We claim that \eqref{eq:seq3} is an inductive chain of $(\CA_X, \nu_X)$.

Now let $H_i \in \CA_X \cap \CA_i = \CA_{i,X}$ be the relevant 
hyperplane in the $i$th step in the sequence \eqref{eq:seq3}.
Let $(\CA_{i,X}, \nu_{i,X})$, $(\CA_{i,X}', \nu_{i,X}')$
and $(\CA_{i,X}'', \nu_{i,X}^*)$
be the triple with respect to $H_i$.

Note that, since 
$(\CA_{i-1, X}, \nu_{i-1, X}) \subsetneq (\CA_{i,X}, \nu_{i,X})$,
it follows from 
Definitions \ref{def:submulti} and \ref{def:Euler} that 
$(\CA_i, \nu_i)$, $(\CA_i', \nu_i') = (\CA_{i-1}, \nu_{i-1})$
and $(\CA_i'', \nu_i^*)$
is the triple with respect to $H_i$.
Therefore, by the construction of the chain in \eqref{eq:seq3} and 
Lemma \ref{lem:euler}(i), we have 
\begin{equation}
\label{eq:seq8}
\left(\left(\CA_{i,X}\right)', \nu_{i,X}'\right) 
= \left((\CA_i')_X, (\nu_i')_X\right) 
=  \left(\CA_{i-1, X}, \nu_{i-1, X}\right). 
\end{equation}
Since $(\CA_i, \nu_i)$ is free by assumption, it follows from 
Theorem \ref{thm:multi} that  $(\CA_{i,X}, \nu_{i,X})$ 
is free for each $i$. 
Consequently, it follows from
Remark \ref{rem:restriction} 
and \eqref{eq:seq8}
that also each restriction 
$((\CA_{i,X})^{H_i}, \nu_{i,X}^*) = (\CA_{i,X}'', \nu_{i,X}^*)$
is free with exponents given by Theorem \ref{thm:add-del}(iii).

Since $(\CA_i^{H_i}, \nu_i^*) = (\CA_i'', \nu_i^*)$ 
is inductively free by assumption
and $r(\CA_i'') < r$, it follows from our induction hypothesis
that the localization 
$((\CA_i'')_X, (\nu_{i}^*)_X)$ is also inductively
free for each $i$.
Thus, thanks to \eqref{eq:seq2} and Lemma \ref{lem:euler}(ii),
\[
(\CA_{i,X}'', \nu_{i,X}^*) = ((\CA_{i,X})'', (\nu_{i,X})^*) 
= ((\CA_i'')_X, (\nu_{i}^*)_X)
\] 
is inductively free for each $i$.

Since the rank of $\CA_{1,X}$ is $1$, 
$(\CA_{1,X}, \nu_{1,X})$ 
is inductively free.
Together with the fact that each of the restrictions 
$(\CA_{i,X}'', \nu_{i,X}^*)$
is also inductively
free for each $i$, a repeated application of the 
addition part of Theorem \ref{thm:add-del} then
shows that  the sequence \eqref{eq:seq3}
is an inductive chain 
of $(\CA_X, \nu_X)$, satisfying Definition \ref{def:indfree},
as claimed.

(ii).
The argument is very similar to the one above.
We argue again by induction on the rank $r(\CA)$.
If $r(\CA) \le 3$, then 
$r(\CA_X) \le 2$ for $X \ne T_\CA$, so
the result follows by 
\cite[Cor.\ 7]{ziegler:multiarrangements}.

So suppose $(\CA, \nu)$ is recursively free of rank $r > 3$
and that the statement holds for 
all recursively free multiarrangements of rank less than $r$.

Since $(\CA, \nu)$ is recursively free, there is a 
recursive chain $(\CA_i, \nu_i)$ of $(\CA, \nu)$, where 
$|\nu_i| = |\nu_{i-1}| \pm 1$ for $i = 1, \ldots, n$, 
and $(\CA_n, \nu_n) = (\CA, \nu)$,
see Remark \ref{rem:recchain}.

Since $X$ is a subspace in $V$, as above, 
we can consider the localization 
$(\CA_{i,X}, \nu_{i,X})$ of each member of the recursive chain,
where again 
$\CA_{i,X}$ is short for $(\CA_i)_X$
and $\nu_{i,X}$ for $\nu_i|_{(\CA_i)_X}$,
cf.~\eqref{eq:seq1}.

Then removing redundant 
terms and reindexing the resulting distinct multiarrangements
if needed, 
we obtain a sequence of multiarrangements starting with the 
empty arrangement 
\begin{equation}
\label{eq:seq5}
\Phi_\ell \ne 
\left(\CA_{1,X}, \nu_{1,X}\right) \subsetneq \left(\CA_{2,X}, \nu_{2,X}\right) 
\subsetneq  \ldots \left(\CA_{m,X}, \nu_{m,X}\right) = \left(\CA_X, \nu_X\right),
\end{equation}
where by construction, at each stage we either 
increase or decrease the multiplicity of a single hyperplane by $1$.

Since $(\CA_i, \nu_i)$ is free by assumption, 
it follows from Theorem \ref{thm:multi} that  $(\CA_{i,X}, \nu_{i,X})$ 
is free for each $i$, and so \eqref{eq:seq5} is a chain of 
free submultiarrangemnts of $(\CA_X, \nu_X)$.

Now fix $i$ and let $H$ be the relevant 
hyperplane in the $i$th step in the sequence \eqref{eq:seq5} above,
i.e., the multiplicity of $H$ is either increased or decreased in this step.
In the first instance, letting 
$(\CA_{i,X}, \nu_{i,X})$, $(\CA_{i,X}', \nu_{i,X}') = (\CA_{i-1,X}, \nu_{i-1,X})$, 
and $(\CA_{i,X}'', \nu_{i,X}^*)$
be the triple with respect to $H$, 
we are in the situation of \eqref{eq:seq8} above.
On the other hand,
if the multiplicity of $H$ is decreased in this step, then
let 
$(\CA_{i-1,X}, \nu_{i-1,X})$, $(\CA_{i-1,X}', \nu_{i-1,X}') 
= (\CA_{i,X}, \nu_{i,X})$, 
and $(\CA_{i-1,X}'', \nu_{i-1,X}^*)$
be the triple with respect to $H$.
In the first instance we argue as in (i) above to 
see that 
$((\CA_{i,X})^H, \nu_{i,X}^*) = (\CA_{i,X}'', \nu_{i,X}^*)$
is free with exponents given by Theorem \ref{thm:add-del}(iii).
In the second case we argue in just the same way to get that 
$((\CA_{i-1,X})^H, \nu_{i-1,X}^*) = (\CA_{i-1,X}'', \nu_{i-1,X}^*)$
is free with exponents given by Theorem \ref{thm:add-del}(iii).

If $(\CA_X, \nu_X)$ is inductively free, then it is 
recursively free and we are done. So we may assume that 
$(\CA_X, \nu_X)$ is not inductively free. 
Then in particular, the sequence \eqref{eq:seq5}
is not an inductive chain.
We claim that \eqref{eq:seq5} is a recursive chain of $(\CA_X, \nu_X)$.
Clearly, the initial part of this sequence is
necessarily a chain of inductively free arrangements 
(one needs to add hyperplanes first before one can start removing them 
again).
Let $k$ be maximal so that 
\begin{equation}
\label{eq:seq6}
\Phi_\ell \ne 
\left(\CA_{1,X}, \nu_{1,X}\right) \subsetneq \left(\CA_{2,X}, \nu_{2,X}\right) 
\subsetneq  \ldots \subsetneq \left(\CA_{k,X}, \nu_{k,X}\right)
\end{equation}
is a sequence of inductively free terms in the 
chain \eqref{eq:seq5}.
Then in particular, $(\CA_{k,X}, \nu_{k,X})$ is inductively 
free, hence recursively free.

Since $(\CA_i'', \nu_i^*)$ is recursively free by assumption
and $r(\CA_i'') < r$, it follows from our induction hypothesis
that the localization 
$((\CA_i'')_X, (\nu_{i}^*)_X)$ is also recursively
free for each $i$.
Thus, thanks to Lemma \ref{lem:euler}(ii),
$((\CA_i)_X)'', \nu_{i,X}^*) = ((\CA_i'')_X, (\nu_{i}^*)_X)$ 
is recursively free for each $i$.

In particular, returning to the sequence \eqref{eq:seq6} and 
the $(k+1)$-st step, 
where we reduce a multiplicity 
for the first time in the chain in \eqref{eq:seq5}, 
it follows from the argument above that 
\[
\exp ((\CA_{k,X})^{H_{k+1}}, \nu_{k,X}^*)  \subseteq 
\exp (\CA_{k,X}, \nu_{k,X}).
\]
Therefore, applying the deletion part of Theorem \ref{thm:add-del} and 
using Lemma \ref{lem:euler}(i), it follows that 
\[
\left(\left((\CA_{k,X}\right)', (\nu_{k,X})'\right)  = 
\left((\CA_k')_X, (\nu_{k}')_X\right) = 
\left(\CA_{k+1, X}, \nu_{k+1,X}\right)
\]
is recursively free, where the deletion is with respect to 
$H_{k+1}$.
Now iterate this process. 
\end{proof}

The special case when 
$\nu \equiv 1$ in Theorem \ref{thm:multitwo} gives
Theorem \ref{thm:local}
for the classes $\CIF$ and $\RF$.
Armed with 
Theorem \ref{thm:local} for $\CIF$, we 
obtain the statement of 
Theorem \ref{thm:local} for the class $\HIF$.

\begin{corollary}
\label{cor:heredfree}
Let $U \subseteq V$ be a subspace and let 
$\CA$ be an arrangement in $V$.
If $\CA$ is hereditarily  inductively free,
then so is the localization $\CA_U$.
\end{corollary}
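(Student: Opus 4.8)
The plan is to reduce to a localization at a member of the intersection lattice and then combine the commutation of localization with restriction (Lemma \ref{lem:swap}(ii)) with the already established Theorem \ref{thm:local} for $\CIF$. As in the proof of Theorem \ref{thm:multitwo}, I would first set $X := \cap_{H \in \CA_U} H \in L(\CA)$, so that $\CA_U = \CA_X$; it therefore suffices to prove that $\CA_X$ is hereditarily inductively free. By Definition \ref{def:heredindfree}, this amounts to showing that the restriction $(\CA_X)^Y$ is inductively free for every $Y \in L(\CA_X)$.

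The key structural observation is that every $Y \in L(\CA_X)$ automatically lies in $L(\CA)$ and satisfies $Y \le X$. Indeed, since $\CA_X \subseteq \CA$ we have $L(\CA_X) \subseteq L(\CA)$, and since each hyperplane of $\CA_X$ contains $X$, any intersection $Y$ of such hyperplanes satisfies $X \subseteq Y$, i.e.\ $Y \le X$ in $L(\CA)$. This places me exactly in the hypotheses of Lemma \ref{lem:swap}(ii) (with $\CB = \CA$), which yields the commutation identity
\[
(\CA_X)^Y = (\CA^Y)_X.
\]

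To finish, I would use that $\CA \in \HIF$ to conclude that $\CA^Y$ is inductively free for this $Y \in L(\CA)$. Since $X \subseteq Y$, the space $X$ is a subspace of the ambient space $Y$ of $\CA^Y$, so Theorem \ref{thm:local} for $\CIF$ applies and shows that the localization $(\CA^Y)_X$ is inductively free. By the displayed identity, $(\CA_X)^Y$ is then inductively free; as $Y \in L(\CA_X)$ was arbitrary, $\CA_X$ is hereditarily inductively free, as desired.

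The argument is short once Lemma \ref{lem:swap}(ii) and Theorem \ref{thm:local} for $\CIF$ are in hand, so there is no serious obstacle. The only point requiring genuine care---and the step I would treat as the main (albeit modest) difficulty---is the verification that the restrictions to be tested, namely $(\CA_X)^Y$ for $Y \in L(\CA_X)$, correspond under the swap identity precisely to localizations at $X$ of restrictions $\CA^Y$ indexed by members $Y$ of the \emph{full} lattice $L(\CA)$, so that hereditary inductive freeness of $\CA$ supplies exactly the inductively free arrangements needed as input.
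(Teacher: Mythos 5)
Your proof is correct and follows essentially the same route as the paper's: reduce to $X = \cap_{H \in \CA_U} H \in L(\CA)$, note $Y \le X$ for $Y \in L(\CA_X)$, apply Lemma \ref{lem:swap}(ii) to get $(\CA_X)^Y = (\CA^Y)_X$, and conclude via Theorem \ref{thm:local} for $\CIF$ applied to the inductively free arrangement $\CA^Y$. The only difference is that you spell out the lattice-membership and ordering verifications that the paper leaves implicit.
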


\begin{proof}
As before, for $X = \cap_{H \in \CA_U} H$, we have 
$\CA_X = \CA_U$ and $X \in L(\CA)$. 
Let $Y \in L(\CA_X)$. Then $Y \le X$ in $L(\CA)$.
Since $\CA$ is hereditarily  inductively free,
$\CA^Y$ is inductively free.
So by Theorem \ref{thm:local}
and Lemma \ref{lem:swap}(ii), we get that
$(\CA_X)^Y = (\CA^Y)_X$ is inductively free.
\end{proof}

Theorem \ref{thm:local} thus follows from 
Theorem \ref{thm:multitwo} and 
Corollary \ref{cor:heredfree}.

\begin{remark}
\label{rem:proof}
It is worth noting that the proof
of Theorem \ref{thm:multitwo} shows that any given 
inductive (resp.\ recursive) chain of the ambient 
multiarrangement descends to give an 
inductive (resp.\ recursive) chain of any 
localization. 
\end{remark}

\subsection{Products of inductively free and 
recursively free arrangements}
\label{ssec:recfreeproducts}

Thanks to \cite[Prop.\ 4.28]{orlikterao:arrangements},
the product of two arrangements is free if and only if 
each factor is free. 
In \cite[Prop.\ 2.10]{hogeroehrle:indfree}, 
the first two authors showed that this factorization property
descends to the class of inductively free arrangements.

Let $(\CA_i, \nu_i)$ be a multiarrangement in $V_i$ for $i = 1,2$.
Then the product $(\CA : = \CA_1 \times \CA_2, \nu)$ is a multiarrangement
in $V = V_1 \oplus V_2$ with multiplicity $\nu := \nu_1 \times \nu_2$,
see \cite{abeteraowakefield:euler}. 

The following is just a reformulation of Theorem \ref{thm:recfreep}.

\begin{theorem}
\label{thm:recfreeproductsmulti}
Let $(\CA_i, \nu_i)$ be a multiarrangement in $V_i$ for $i = 1,2$.
Then the product $(\CA, \nu)$ is inductively free 
(resp.\ recursively free) 
if and only if each 
factor $(\CA_i, \nu_i)$ is inductively free 
(resp.\ recursively free). 
\end{theorem}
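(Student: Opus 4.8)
The plan is to treat the two directions separately, and within each to run an induction on $|\nu| = |\nu_1| + |\nu_2|$, reducing in each step to the Addition-Deletion Theorem \ref{thm:add-del} applied to a single hyperplane of $\CA = \CA_1 \times \CA_2$. The crucial structural fact I would isolate first is how the triple interacts with the product: if $H_0 \in \CA_1$, so that $H_0 = H_0' \times V_2$ for some $H_0' \in \CA_1$, then deletion of $H_0$ in $(\CA, \nu)$ is the product of the deletion of $H_0'$ in $(\CA_1, \nu_1)$ with the unchanged factor $(\CA_2, \nu_2)$, and the restriction satisfies $(\CA'', \nu^*) = ((\CA_1)'' \times \CA_2, (\nu_1)^* \times \nu_2)$. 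This compatibility of the Euler multiplicity with the product — that restricting to $H_0 \cap (\text{rest})$ only sees the $V_1$-factor because $\CA_Y = (\CA_1)_{Y'} \times \CA_2$ for $Y = H_0 \cap H$ with $H \in \CA_1$ — is the technical heart of the argument, and I expect verifying it to be the main obstacle. It should follow from \cite[Lem.\ 1.3]{abeteraowakefield:euler} together with Definition \ref{def:Euler}, since the value $\nu^*(Y)$ depends only on $D(\CA_Y, \nu_Y)$ and the derivation module of a product splits as a direct sum over the factors.

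For the backward direction (each factor in $\CIFM$, resp.\ $\CRFM$, implies the product is), I would argue by induction on $|\nu|$. If $|\nu_1| = |\nu_2| = 0$ the product is empty and lies in the class by Definition \ref{def:indfree}(i) (resp.\ \ref{def:recfree}(i)). Otherwise at least one factor, say $(\CA_1, \nu_1)$, is nonempty and inductively (resp.\ recursively) free, so it admits a hyperplane $H_0'$ realizing the inductive (resp.\ recursive) step, with both $((\CA_1)', (\nu_1)')$ and $((\CA_1)'', (\nu_1)^*)$ in the class and the required exponent containment. Using the product compatibility above, the deletion $(\CA', \nu')$ of $(\CA, \nu)$ at $H_0 = H_0' \times V_2$ is the product of $((\CA_1)', (\nu_1)')$ with $(\CA_2, \nu_2)$, and the restriction $(\CA'', \nu^*)$ is the product of $((\CA_1)'', (\nu_1)^*)$ with $(\CA_2, \nu_2)$; both have strictly smaller order, so by the induction hypothesis both lie in the class. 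Finally the exponent containment $\exp(\CA'', \nu^*) \subseteq \exp(\CA', \nu')$ follows because the exponents of a free product are the multiset union of the exponents of the factors (a consequence of the splitting $D(\CA, \nu) = D(\CA_1, \nu_1) \otimes_{S_1} S \oplus S \otimes_{S_2} D(\CA_2, \nu_2)$), so the containment for the $\CA_1$-factors lifts to the products after adjoining the common exponents of $(\CA_2, \nu_2)$. Definition \ref{def:indfree}(ii) (resp.\ \ref{def:recfree}(ii)/(iii)) then places $(\CA, \nu)$ in the class.

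For the forward direction (the product in the class implies each factor is), this is now essentially immediate from Theorem \ref{thm:multitwo}. Each factor $(\CA_i, \nu_i)$ is a localization of the product: taking $U = V_1 \times \{0\}$ gives $\CA_U = \CA_1 \times \Phi_{\dim V_2}$ up to the trivial empty factor, whence $(\CA_U, \nu_U)$ is isomorphic to $(\CA_1, \nu_1)$ as a multiarrangement, and symmetrically for the second factor. Since $\CIFM$ and $\CRFM$ are closed under localization by Theorem \ref{thm:localmulti} (equivalently Theorem \ref{thm:multitwo}), each factor inherits inductive (resp.\ recursive) freeness from the product. Combining the two directions completes the proof, and as noted this yields Theorem \ref{thm:recfreeproducts} for $\CRFM$ by specializing $\nu \equiv 1$.
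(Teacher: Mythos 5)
Your forward direction is exactly the paper's argument: realize each factor as a localization of the product at a suitable element of $L(\CA)$ and invoke Theorem \ref{thm:localmulti}. One small slip there: the hyperplanes of $\CA$ containing $U = V_1 \oplus \{0\}$ are $\{V_1 \oplus H_2 \mid H_2 \in \CA_2\} \cong \CA_2$, not $\CA_1$; to recover $(\CA_1,\nu_1)$ you should localize at $\{0\}\oplus V_2$ (the paper uses $X_1 = T_{\CA_1}\oplus V_2$). The paper dispatches the reverse implication with a citation, so your added detail is welcome, and your key compatibility claim --- that for $H_0 = H_0'\oplus V_2$ the deletion and the Euler restriction of the product factor through the first coordinate, since $\nu^*(Y)$ depends only on $D(\CA_Y,\nu_Y)$ and $\CA_Y$ splits along the product --- is correct and is indeed the technical heart.

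There is, however, a genuine gap in your reverse implication in the recursive case: you induct on $|\nu| = |\nu_1|+|\nu_2|$ and assert that a recursively free $(\CA_1,\nu_1)$ admits a hyperplane $H_0'$ with both $((\CA_1)',(\nu_1)')$ and $((\CA_1)'',(\nu_1)^*)$ in $\CRFM$ and the exponent containment. But membership in $\CRFM$ may be certified only via Definition \ref{def:recfree}(iii), i.e.\ $(\CA_1,\nu_1)$ arises as the \emph{deletion} of some larger $(\CB_1,\mu_1)\in\CRFM$; a recursive chain is not monotone in the order (Remark \ref{rem:recchain}) and may pass through multiarrangements of strictly larger order (even with extra hyperplanes) before descending to $(\CA_1,\nu_1)$. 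So $|\nu|$ does not decrease along the recursion and your induction does not close. The fix is to induct instead on the length of a recursive chain witnessing membership (structural induction on the generation of $\CRFM$ by rules (i)--(iii)), applying rule (iii) to $(\CB_1\times\CA_2,\mu_1\times\nu_2)$ when the last step for $(\CA_1,\nu_1)$ was a deletion. For the inductive case your induction on $|\nu|$ is fine, since inductive chains are monotone increasing.
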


\begin{proof}
We just give the argument for the case of recursive 
freeness, the argument for inductive freeness is identical.

The reverse implication is straightforward, 
cf.~\cite[Prop.\ 2.10]{hogeroehrle:indfree}.
For the forward implication, assume that 
$(\CA, \nu)$ is recursively free.
Set $X_1 := T_{\CA_1} \oplus V_2$
and $X_2 := V_1 \oplus T_{\CA_2}$. 
Then both $X_1$ and $X_2$ belong to 
the intersection lattice of $\CA$, 
\cite[Prop.\ 2.14]{orlikterao:arrangements}.
Note that 
$\CA_{X_1} = \{H_1 \oplus V_2 \mid H_1 \in \CA_1\} \cong \CA_1$ and 
$\CA_{X_2} = \{V_1 \oplus H_2 \mid H_2 \in \CA_2\} \cong \CA_2$.
It thus follows from Theorem \ref{thm:localmulti} 
that both
$(\CA_{X_1}, \nu_{X_1}) = (\CA_1, \nu_1)$ and 
$(\CA_{X_2}, \nu_{X_2}) = (\CA_2, \nu_2)$ 
are recursively free.
\end{proof}

The special case of Theorem \ref{thm:recfreeproductsmulti} for $\CRFM$
when $\nu_i \equiv 1$ gives Theorem \ref{thm:recfreeproducts}.

\section{Applications to Reflection Arrangements}
\label{sec:appl}

\subsection{Inductive Freeness of Reflection Arrangements}
\label{ssec:reflection}

In this section we demonstrate how 
Theorem \ref{thm:local} can be used to 
show that certain arrangements are not inductively free.

Let $W$ be one of the exceptional complex reflection groups 
$G_{29}$, $G_{33}$,  or $G_{34}$.
Then by 
\cite[Tables C.10, C.14, C.15]{orlikterao:arrangements},
$W$ admits a parabolic subgroup $W_X$ of type 
$G(4,4,3)$, $G(3,3,4)$, or $G(3,3,5)$, respectively.
Thanks to  \cite[Prop.\ 3.2]{hogeroehrle:indfree}, the 
reflection arrangement of 
$G(r,r, \ell)$ is not inductively free 
for $r, \ell \ge 3$.
By \cite[Cor.\ 6.28]{orlikterao:arrangements},
we have
$\CA(W_X) = \CA(W)_X$ and so it follows from 
Theorem \ref{thm:local}, that $\CA(W)$
is not inductively free in each of the three instances.
This was proved in 
\cite[\S 3.1.4]{hogeroehrle:indfree} by different means.
 
\subsection{Inductive Freeness 
of Ziegler's canoncial Multiplicity 
and concentrated Multiplicities 
for monomial groups}
\label{ssec:grrl}

Theorem \ref{thm:localmulti} is very useful in 
showing that a given 
multiarrangement is not inductively free by exhibiting a 
suitable localization which is known to not be inductively free. 
We demonstrate this in the following results.

Let $\CA = \CA(W)$ be the reflection arrangement of 
the complex reflection group $W := G(r,r, \ell)$
for $r, \ell \ge 3$.
Let $H_{i,j}(\zeta) := \ker(x_i - \zeta x_j) \in \CA$,
where $1 \le i < j \le \ell$ and $\zeta$ is an $r$th root of unity
and 
let $H_i :=\ker x_i$ be the $i$th coordinate hyperplane for $1 \le i \le \ell$,
\cite[\S 6.4]{orlikterao:arrangements}.

Fix $H_0 \in \CA$, $m_0 \in \BBZ_{> 1}$ and let 
$\delta = \delta_{H_0,m_0}$ be
as in Definition \ref{def:single}.
Then, since $\CA$ is free
(cf.~\cite[\S 6.3]{orlikterao:arrangements}) so is 
$(\CA, \delta)$, by 
Proposition \ref{prop:single}(i).
Let $\CA''$ be the restriction of $\CA$ with respect 
to $H_0$.
Then $(\CA'', \delta^*) =  (\CA'', \kappa)$ is free, 
thanks to Proposition \ref{prop:single}(ii),
where $\kappa$ is the canonical multiplicity
from Definition \ref{def:zieglermulti}.

Using results from \cite{hogeroehrle:indfree}
and Theorem \ref{thm:localmulti}, 
we show that both $(\CA, \delta)$ 
and $(\CA'', \delta^*)$
fail to be inductively free 
for $\ell \ge 5$.

\begin{proposition}
\label{prop:grrl}
Let $\CA = \CA(W)$ be the reflection arrangement of 
$W = G(r,r, \ell)$ for $r \ge 3, \ell \ge 5$.
Fix $H_0 \in \CA$, $m_0 \in \BBZ_{> 1}$ and let 
$\delta = \delta_{H_0,m_0}$ be as above. Then 
\begin{itemize}
\item[(i)] 
$(\CA, \delta)$ is not inductively free, and
\item[(ii)] 
$(\CA'', \delta^*)$ is not inductively free. 
\end{itemize}
\end{proposition}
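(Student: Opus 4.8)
The plan is to use Theorem~\ref{thm:localmulti}: inductive freeness of a multiarrangement passes to all of its localizations, so in each part it suffices to produce a single localization that is \emph{not} inductively free. In both parts the witness will be the simple reflection arrangement $\CA(G(r,r,3))$ (possibly multiplied by an empty arrangement in the unused coordinates, which by Theorem~\ref{thm:recfreep} does not affect inductive freeness), since $\CA(G(r,r,3))$ is not inductively free by \cite[Prop.\ 3.2]{hogeroehrle:indfree} as $r \ge 3$. Write $H_0 = H_{a,b}(\zeta_0) = \ker(x_a - \zeta_0 x_b)$ for the distinguished hyperplane; it involves the two coordinates $a \ne b$, and because $\ell \ge 5$ we may fix three further coordinates $c, d, e \in \{1, \dots, \ell\} \setminus \{a, b\}$.

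For part (i) I would localize \emph{away} from $H_0$. Take $X := \{x_c = x_d = x_e = 0\}$. A direct check shows $X \in L(\CA)$ and $\CA_X = \{H_{i,j}(\zeta) \mid i, j \in \{c, d, e\}\} \cong \CA(G(r,r,3))$. Since $a, b \notin \{c, d, e\}$ we have $H_0 \notin \CA_X$, so the concentrated multiplicity restricts trivially, $\delta_X \equiv 1$, and hence $(\CA_X, \delta_X)$ is the \emph{simple} arrangement $\CA(G(r,r,3))$. As this is not inductively free, Theorem~\ref{thm:localmulti} shows $(\CA, \delta)$ is not inductively free. (Here $\ell \ge 4$ would already be enough.)

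For part (ii) I would instead localize \emph{through} $H_0$. Recall that $(\CA'', \delta^*) = (\CA'', \kappa)$ is already identified with Ziegler's canonical multiplicity in the discussion preceding the proposition. Consider the flat $W := H_0 \cap \{x_c = x_d = x_e = 0\}$; one checks $W \in L(\CA)$ and $\CA_W = \{H_0\} \cup \{H_{i,j}(\zeta) \mid i, j \in \{c, d, e\}\}$. Since $W \subseteq H_0$, Lemma~\ref{lem:swap}(ii) gives
\[
(\CA'')_W = (\CA^{H_0})_W = (\CA_W)^{H_0} \cong \CA(G(r,r,3)),
\]
the last isomorphism because the hyperplanes $H_{i,j}(\zeta)$ with $i, j \in \{c, d, e\}$ do not involve $x_a, x_b$ and so restrict faithfully to $H_0$. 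It remains to see that $\kappa$ is trivial on these hyperplanes: for such a flat $Y = H_0 \cap H_{i,j}(\zeta)$ the localization in the ambient $\CA$ is exactly $\CA_Y = \{H_0, H_{i,j}(\zeta)\}$, whence $\kappa(Y) = |\CA_Y| - 1 = 1$. Thus the localization $((\CA'')_W, \kappa_W)$ is the simple arrangement $\CA(G(r,r,3))$, which is not inductively free, and Theorem~\ref{thm:localmulti} forces $(\CA'', \delta^*)$ to not be inductively free. The hypothesis $\ell \ge 5$ enters precisely here, since extracting a copy of $G(r,r,3)$ among the coordinates outside $\{a, b\}$ requires $\ell - 2 \ge 3$.

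The routine points are the verifications that $X$ and $W$ genuinely lie in $L(\CA)$ (equivalently, that localizing returns the arrangements listed) and the elementary counts of hyperplanes through a codimension-two flat. The one genuinely delicate point, and the main obstacle, is in part (ii): one must guarantee that after passing to the restriction $\CA''$ and localizing, the multiplicity carried by the extracted $\CA(G(r,r,3))$ is trivial rather than some larger value which could leave the localization inductively free. This is controlled by combining the global identity $\delta^* = \kappa$ with the interchange of restriction and localization in Lemma~\ref{lem:swap}(ii) (or Corollary~\ref{cor:extEuler}) and the direct flat count $\kappa(Y) = 1$; once these are in place no further computation is needed.
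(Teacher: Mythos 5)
Your proposal is correct and takes essentially the same route as the paper: localize to exhibit a simple copy of $\CA(G(r,r,m))$ with $m\ge 3$ (not inductively free by \cite[Prop.\ 3.2]{hogeroehrle:indfree}) and conclude via Theorem~\ref{thm:localmulti}, using $\delta^*=\kappa$ and the swap of restriction and localization in part (ii). The only (immaterial) differences are that you localize at a rank-$3$ flat to extract $\CA(G(r,r,3))$ and verify $\kappa(Y)=|\CA_Y|-1=1$ directly on the relevant flats, whereas the paper localizes at the rank-$(\ell-2)$ flat $\bigcap_{3\le i\le \ell}H_i$ to obtain $\CA(G(r,r,\ell-2))$ after tabulating $\kappa$ on all of $\CA''$.
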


\begin{proof}
Since $W$ is transitive on $\CA$, without loss, we may choose
$H_0 := H_{1,2}(1) = \ker(x_1 - x_2)$.
Define 
\[
X := \bigcap_{3 \le i < j \le \ell}{H_{i,j}(\zeta)}
= \bigcap_{3 \le i \le \ell} H_i.
\]
Then $X$ is of rank $\ell-2$ in $L(\CA)$.

First we show (i). 
By definition of $(\CA, \delta)$ and 
\cite[Cor.\ 6.28]{orlikterao:arrangements},
we have
$\CA(W_X) = \CA(W)_X \cong \CA(G(r,r,\ell-2))$ and 
$\delta_X \equiv 1$.
Consequently, $\left( \CA_X, \delta_X\right)$ 
is isomorphic to the simple reflection arrangement
$\CA(G(r,r,\ell-2))$.
Thanks to \cite[Prop.\ 3.2]{hogeroehrle:indfree}, 
the latter is not inductively free, as $\ell \ge 5$.
Therefore, $(\CA, \delta)$
is not inductively free, 
owing to Theorem \ref{thm:localmulti} and so (i) holds.

Now for (ii), recall that
$(\CA'', \delta^*) =  (\CA'', \kappa)$ is free, 
thanks to Proposition \ref{prop:single}(ii).

Set $Y_{i,j}(\zeta) := H_0 \cap H_{i,j}(\zeta) \in \CA''$.
Then one readily checks that for $Y \in \CA''$, we have
\begin{equation}
\label{eq:kappa}
\kappa(Y) = 
\begin{cases}
r-1 & \text{ for } Y = Y_{1,2}(\zeta),\\
2   & \text{ for } Y = Y_{1,i}(\zeta), Y_{2,i}(\zeta) \text{ and } 3 \le i \le \ell, \\
1   & \text{ for } Y = Y_{i,j}(\zeta) \text{ and }  3 \le i < j \le \ell.\\
\end{cases}
\end{equation}
According to \eqref{eq:kappa}, the multiplicity 
$\kappa_X$ of the localization  
$\left( (\CA'')_X, \kappa_X\right)$ satisfies $\kappa_X \equiv 1$.
Thus, $\left( (\CA'')_X, \kappa_X\right)$ 
is isomorphic to the simple reflection arrangement
$\CA(G(r,r,\ell-2))$.
Again by \cite[Prop.\ 3.2]{hogeroehrle:indfree}, 
the latter is not inductively free, as $\ell \ge 5$.
Therefore, 
$(\CA'', \delta^*) = (\CA'', \kappa)$,
is not inductively free, 
thanks to Theorem \ref{thm:localmulti} and (ii) follows.
\end{proof}

Proposition \ref{prop:grrl} 
generalizes to a larger class of multiarrangements
stemming from complex reflection groups.
Orlik and Solomon defined 
complex $\ell$-arrangements $\CA^k_\ell(r)$ in 
\cite[\S 2]{orliksolomon:unitaryreflectiongroups}
(cf.\ \cite[\S 6.4]{orlikterao:arrangements}) which
interpolate between the
reflection arrangements of the complex reflection groups
$G(r,r,\ell)$ and $G(r,1,\ell)$. 
For $r, \ell \geq 3$ and $0 \leq k \leq \ell$ 
the defining polynomial of
$\CA^k_\ell(r)$ is given by
\[
Q(\CA^k_\ell(r)) = x_1 \cdots x_k\prod\limits_{1 \le i < j \le \ell}(x_i^r - x_j^r),
\]
so that 
$\CA^\ell_\ell(r) = \CA(G(r,1,\ell))$ and 
$\CA^0_\ell(r) = \CA(G(r,r,\ell))$. 
Again fix $H_0 \in \CA$, $m_0 \in \BBZ_{> 1}$ and let 
$\delta = \delta_{H_0,m_0}$ be
as in Definition \ref{def:single}.
Then, since $\CA$ is free
(cf.~\cite[Prop.\ 6.85]{orlikterao:arrangements}),
so is $(\CA, \delta)$, by 
Proposition \ref{prop:single}(i).
Let $\CA''$ be the restriction of $\CA$ with respect 
to $H_0$.
Then $(\CA'', \delta^*) =  (\CA'', \kappa)$ is free, 
thanks to Proposition \ref{prop:single}(ii).

Combining results from \cite{hogeroehrle:indfree}
with Theorem \ref{thm:localmulti}, 
we show that both $(\CA, \delta)$ 
and $(\CA'', \delta^*)$
fail to be inductively free 
provided 
$\ell \ge 5$, $0 \le k \le \ell-3$
and $H_0$ is of the form $H_{i,j}(\zeta)$
in the latter case.

\begin{proposition}
\label{prop:akl}
Let $\CA = \CA^k_\ell(r)$ for $r \ge 3, \ell \ge 5$
and $0 \le k \le \ell-3$.
Fix $H_0 \in \CA$, $m_0 \in \BBZ_{> 1}$ and let 
$\delta = \delta_{H_0,m_0}$ be as above. Then 
\begin{itemize}
\item[(i)] 
$(\CA, \delta)$ is not inductively free, and
\item[(ii)] 
for $H_0 = H_{i,j}(\zeta)$, also 
$(\CA'', \delta^*)$ is not inductively free. 
\end{itemize}
\end{proposition}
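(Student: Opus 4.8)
The plan is to follow the pattern of Proposition~\ref{prop:grrl}: in each part I exhibit a localization isomorphic to a \emph{simple} reflection arrangement $\CA(G(r,r,\ell'))$ with $\ell'\ge 3$, invoke \cite[Prop.\ 3.2]{hogeroehrle:indfree} to see that it is not inductively free, and then pull this failure back by Theorem~\ref{thm:localmulti}. The governing principle is that a coordinate hyperplane $H_m=\ker x_m$ lies in a localization $\CA_X$ precisely when $x_m|_X=0$; hence any localization obtained by collapsing only the braid coordinates $x_{k+1},\dots,x_\ell$ contains no coordinate hyperplane and is pure of type $G(r,r,\cdot)$. The bound $k\le\ell-3$ is exactly what guarantees that at least three braid coordinates are available.

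For (i) I set $X:=\bigcap_{k<i\le\ell}H_i$; by the principle above $\CA_X=\{H_{i,j}(\zeta)\mid k<i<j\le\ell\}\cong\CA(G(r,r,\ell-k))$, with $\ell-k\ge 3$ and no coordinate hyperplane. It remains to secure $\delta_X\equiv 1$, i.e.\ $H_0\notin\CA_X$. This holds automatically unless $H_0=H_{i_0,j_0}(\zeta_0)$ with \emph{both} $i_0,j_0>k$; in that case, as long as $\ell-k\ge 4$, I replace $X$ by $\bigcap_{i\in\{k+1,\dots,\ell\}\setminus\{j_0\}}H_i$, which drops $H_0$ but retains a pure $G(r,r,\ell-k-1)$ of rank $\ge 3$. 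Theorem~\ref{thm:localmulti} then yields~(i) in every case except $k=\ell-3$ with $i_0,j_0>k$, to which I return below.

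For (ii) Proposition~\ref{prop:single}(ii) identifies $(\CA'',\delta^*)$ with $(\CA'',\kappa)$, so the first task is to compute the canonical multiplicity $\kappa(Y)=|\CA_Y|-1$ on $\CA''=\CA^{H_0}$ by means of \cite[Prop.\ 4.1]{abeteraowakefield:euler}, in direct analogy with \eqref{eq:kappa}. Writing $H_0=H_{i_0,j_0}(\zeta_0)$ and letting $u$ be the coordinate obtained on $H_0$ by identifying $x_{i_0}$ with $\zeta_0 x_{j_0}$, I expect $\kappa=1$ on every braid hyperplane $\ker(x_a-\zeta x_b)$ among the coordinates untouched by $H_0$, while $\kappa\ge 2$ on the hyperplanes meeting $u$ (with $\ker u$ carrying a value of order~$r$). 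Collapsing precisely the untouched braid coordinates then gives a flat $X$ with $(\CA'')_X\cong\CA(G(r,r,\ell'))$ and $\kappa_X\equiv 1$, where $\ell'=(\ell-k)-h$ and $h\in\{0,1,2\}$ is the number of braid coordinates among $\{i_0,j_0\}$. If $H_0$ joins two coordinate-hyperplane directions (so $h=0$, possible once $k\ge 2$) one gets $\ell'=\ell-k\ge 3$ and concludes by Theorem~\ref{thm:localmulti}.

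The hard part in both statements lies at the boundary $\ell-k=3$. In~(i) this is the case $k=\ell-3$ with $i_0,j_0>k$: the unique pure $G(r,r,3)$ localization then necessarily contains $H_0$, so $\delta_X$ stays concentrated and one must show directly that the rank-$3$ multiarrangement $(\CA(G(r,r,3)),\delta_{H_0,m_0})$ is not inductively free; I expect this to reduce to a short application of Theorem~\ref{thm:add-del} using its exponents $\{m_0,r+1,2r-2\}$ furnished by Proposition~\ref{prop:single}(i). In~(ii) the corresponding difficulty appears when $h\ge 1$ and $k$ is close to $\ell-3$, so that fewer than three braid coordinates survive the restriction with $\kappa=1$; here one either exploits the $S_k\times S_{\ell-k}$ symmetry of $\CA$ to reduce to a more favorable $H_0$, or treats the small leftover multiarrangement by hand. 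In all cases the linchpin is the explicit determination of the $\kappa$-values---the analogue of \eqref{eq:kappa}---which I regard as the principal computational obstacle.
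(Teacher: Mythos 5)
Your strategy is the paper's: localize at an intersection of coordinate hyperplanes indexed by braid coordinates so as to produce a \emph{simple} $\CA(G(r,r,m))$ with $m\ge 3$, quote \cite[Prop.\ 3.2]{hogeroehrle:indfree}, and pull the failure back via Theorem~\ref{thm:localmulti}; the paper localizes at the rank-$3$ flat $Z=\bigcap_{\ell-2\le i\le\ell}H_i$ rather than at all braid coordinates, which is immaterial. However, what you have written is a plan with two load-bearing pieces missing. First, for (ii) the values of $\kappa$ on $\CA''$ --- which you yourself call the principal computational obstacle --- are only guessed at, not computed; the paper records them explicitly in \eqref{eq:akl} (your guesses are consistent with it), and without them you cannot actually verify $\kappa_X\equiv 1$ on your chosen flat, which is the whole point of the construction.

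Second, the boundary cases you defer remain genuinely open in your write-up. For (i) with $k=\ell-3$ and $H_0=H_{i_0,j_0}(\zeta_0)$, $i_0,j_0>k$, every rank-$3$ flat whose localization is a simple $G(r,r,3)$ contains $H_0$, so you are reduced to deciding whether $(\CA(G(r,r,3)),\delta_{H_0,m_0})$ is inductively free; ``I expect this to reduce to a short application of Theorem~\ref{thm:add-del}'' is not an argument, and it is not obvious which way it goes (for $m_0=2$, deleting $H_0$ lands on the simple $\CA(G(r,r,3))$, which is free but \emph{not} inductively free, so no single addition--deletion step settles it). The analogous cases $h\ge 1$ with $k\in\{\ell-4,\ell-3\}$ in (ii) are likewise only gestured at. For what it is worth, the paper disposes of these configurations by asserting that one may take $H_0\in\{H_1,\,H_{1,2}(1)\}$ ``without loss''; since for $k\ge 2$ the hyperplanes $H_{i,j}(\zeta)$ with both indices $>k$ form a separate orbit under the evident symmetries of $\CA^k_\ell(r)$ (permutations of $\{1,\dots,k\}$ and of $\{k+1,\dots,\ell\}$ together with coordinate scalings by roots of unity), your instinct that this case needs separate attention is sound --- but that makes it all the more necessary to supply the missing argument rather than defer it.
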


\begin{proof}
For $k = 0$, this is just Proposition \ref{prop:grrl}. 
So we may assume that $1 \le k \le \ell-3$.

First we show (i). Define 
\[
Z := \bigcap_{\ell-2 \le i < j \le \ell}{H_{i,j}(\zeta)}
= \bigcap_{\ell-2 \le i \le \ell} H_i.
\]
Then $Z$ is of rank $3$ in $L(\CA)$.
Without loss 
we may suppose that either
$H_0 := H_{1,2}(1) = \ker(x_1 - x_2)$
or 
$H_0 := H_1 = \ker x_1$.
In both instances, 
by the definition of $(\CA, \delta)$
and the fact that
$\ell \ge 5$
and $0 \le k \le \ell-3$,
we have $\delta_Z \equiv 1$, and so 
$(\CA_Z, \delta_Z)$ is isomorphic to the simple 
reflection arrangement $\CA(G(r,r,3))$.
By \cite[Prop.\ 3.2]{hogeroehrle:indfree}, 
the latter is not inductively free.
Therefore, $(\CA, \delta)$
is not inductively free, by
Theorem \ref{thm:localmulti}, so that (i) holds.

For (ii), recall again that
$(\CA'', \delta^*) =  (\CA'', \kappa)$ is free, 
thanks to Proposition \ref{prop:single}(ii).
We may suppose without loss that 
$H_0 := H_{1,2}(1) = \ker(x_1 - x_2)$.
Set $Y_{i,j}(\zeta) := H_0 \cap H_{i,j}(\zeta)$ and 
$Y_i  := H_0 \cap H_i$ in $\CA''$.
Then one readily checks that for $Y \in \CA''$, we have
\begin{equation}
\label{eq:akl}
\kappa(Y) = 
\begin{cases}
r+1\  (\text{resp. } r) & \text{ for } Y = Y_{1,2}(\zeta) 
\text{ and } k \ge 2\  (\text{resp. }  k = 1),\\
2   & \text{ for } Y = Y_{1,i}(\zeta), Y_{2,i}(\zeta) \text{ and } 3 \le i \le \ell,\\
1   & \text{ for } Y = Y_{i,j}(\zeta) \text{ and }  3 \le i < j \le \ell,\\
1   & \text{ for } Y = Y_i \text{ and }  3 \le i \le k,\\
\end{cases}
\end{equation}
where the value of $\kappa(Y)$ in the first case depends on $k$ and 
the last instance only occurs if $k \ge 3$. 
Define 
\[
X := H_0 \cap Z = H_0 \cap \left(\bigcap_{\ell-2 \le i \le \ell} H_i\right).
\]
Then $X$ is of rank $3$ in $L(\CA'')$.
According to \eqref{eq:akl}, the multiplicity 
$\kappa_X$ of the localization  
$\left( (\CA'')_X, \kappa_X\right)$ satisfies $\kappa_X \equiv 1$.
Thus, it follows from the construction 
and the fact that $\ell \ge 5$
and $0 \le k \le \ell-3$, that the localization 
$((\CA'')_X, \kappa_X)$ is isomorphic to the simple 
reflection arrangement $\CA(G(r,r,3))$.
Once again by \cite[Prop.\ 3.2]{hogeroehrle:indfree}, 
the latter is not inductively free.
Therefore, $(\CA'', \delta^*) =  (\CA'', \kappa)$
is not inductively free, by
Theorem \ref{thm:localmulti}, so (ii) follows.
\end{proof}




\bigskip

\bibliographystyle{amsalpha}

\newcommand{\etalchar}[1]{$^{#1}$}
\providecommand{\bysame}{\leavevmode\hbox to3em{\hrulefill}\thinspace}
\providecommand{\MR}{\relax\ifhmode\unskip\space\fi MR }
\providecommand{\MRhref}[2]{%
  \href{http://www.ams.org/mathscinet-getitem?mr=#1}{#2} }
\providecommand{\href}[2]{#2}


\end{document}